\documentclass[final,hidelinks,onefignum,onetabnum]{siamart251216}

\usepackage{amsfonts}
\usepackage{graphicx}
\usepackage{hyperref}
\usepackage{algorithmic} 
\usepackage{paralist}
\usepackage{etoolbox}
\usepackage[bold]{hhtensor} 
\usepackage{tikz}
\usepackage{pgfplots}
\pgfplotsset{compat=1.18}

\ifpdf
  \DeclareGraphicsExtensions{.eps,.pdf,.png,.jpg}
\else
  \DeclareGraphicsExtensions{.eps}
\fi

\newsiamremark{remark}{Remark}
\newsiamremark{hypothesis}{Hypothesis}
\crefname{hypothesis}{Hypothesis}{Hypotheses}
\newsiamthm{claim}{Claim}
\newsiamremark{fact}{Fact}
\crefname{fact}{Fact}{Facts}

\headers{Gauge-invariant HHO method for magnetic Schr\"{o}dinger}{J. Aghili}

\title{Asymptotic gauge-invariant Hybrid High-Order method for magnetic Schr\"{o}dinger equations}

\author{Joubine Aghili\thanks{IRMA, Université de Strasbourg, CNRS UMR7501, 7 rue Ren\'{e} Descartes, 67084 Strasbourg, France
  (\email{aghili@unistra.fr}).}}

\usepackage{amsopn}

\usepackage{custom}

\ifpdf
\hypersetup{
  pdftitle={Asymptotic gauge-invariant Hybrid High-Order method for magnetic Schr\"{o}dinger equations},
  pdfauthor={Joubine Aghili}
}
\fi

\begin{document}

\maketitle

\begin{abstract}
We introduce a Hybrid High-Order (HHO) method for the Schr\"{o}dinger equation in the presence of a magnetic vector potential. In quantum mechanics, physical observables are invariant under continuous gauge transformations, which must be kept at the discrete level to avoid unphysical artifacts. To address this, we construct a discrete covariant gradient operator on arbitrary polyhedral meshes. We prove that the resulting discrete bilinear form guarantees gauge covariance asymptotically at the discrete level. The resulting scheme achieves optimal convergence rates and preserves a discrete Gårding inequality, guaranteeing a stable ground state. The theoretical properties of the scheme are corroborated by numerical experiments, including the computation of the Fock-Darwin fundamental energy and replicating the Aharonov-Bohm effect.
\end{abstract}

\begin{keywords} Hybrid High-Order method, magnetic Schrödinger equation, discrete gauge covariance. \end{keywords}

\begin{MSCcodes} 65N30, 65N12, 65N15, 81Q05 \end{MSCcodes}

\section{Introduction}

The numerical simulation of quantum mechanical systems under the influence of electromagnetic fields plays a pivotal role in modern physics, with applications ranging from condensed matter to quantum computing.
The evolution and stationary states of a non-relativistic particle in a magnetic field are governed by the Schr\"{o}dinger equation, where the magnetic effect is introduced via a vector potential $\vA$.
A fundamental property of this formulation is its \textit{gauge invariance}: transforming the vector potential as $\vA \to \vA + \nabla \chi$ alongside a local phase shift of the wavefunction $\psi \to \psi e^{i\chi}$ leaves the physical observables unchanged.

From a numerical perspective, preserving this continuous symmetry at the discrete level is a non-trivial task. Standard discretization schemes often break gauge invariance, introducing artificial dependencies on the chosen gauge and potentially leading to spurious eigenvalues or unphysical long-time dynamics. The severe consequences of such discretizations were explicitly highlighted by Governale and Ungarelli \cite{Governale1998}, who pioneered the use of Wilson's lattice gauge theory on uniform finite difference grids to restore gauge invariance. This lattice gauge formalism has since been generalized to broader grid-based and pseudospectral methods by Halvorsen and Kvaal \cite{Halvorsen2009}, and recently adapted for the non-perturbative solution of diatomic molecules in strong magnetic fields by Yenugu et al. \cite{Yenugu2025}. For time-dependent problems, other grid-based approaches have also been explored, such as the Finite Difference Time Domain (FDTD) method for complex wavefunctions \cite{Sudiarta2007}, splitting methods utilizing rotated potentials to accommodate magnetic rotations \cite{Gradinaru2020}, and open-source split-step Fourier solvers \cite{QMsolve}.

In the realm of finite element methods (FEM), Alouges and Bonnaillie-Noël \cite{Alouges2006} proposed bypassing standard gauge issues by decoupling the modulus and phase of the wavefunction to study eigenstates in domains with corners. From a preconditioning perspective, Ovall and Zhu \cite{Ovall2025} recently demonstrated that computing a canonical gauge via a Helmholtz-Hodge decomposition can significantly reduce eigenvector oscillations, thereby improving standard FEM efficiency. A major breakthrough in structure-preserving FEM was achieved by Christiansen and Halvorsen \cite{Christiansen2011, Christiansen2015}, who successfully merged lattice gauge concepts with finite element exterior calculus to yield gauge-invariant discretizations on simplicial grids, achieving up to second-order convergence. Furthermore, in the context of Discontinuous Galerkin (DG) methods, Yi, Huang, and Liu \cite{Yi2020} developed conservative schemes that preserve mass and energy for nonlinear magnetic Schr\"{o}dinger equations while achieving optimal $L^2$ error estimates. 
Despite these notable advancements across various discretization paradigms, extending exact discrete gauge invariance to arbitrarily high-order methods on complex unstructured polyhedral meshes, using a purely local formulation remains, to the best of the author's knowledge, an open problem.

In recent years, the Hybrid High-Order (HHO) method \cite{DiPietro2014} has emerged as a highly flexible framework for the discretization of partial differential equations. By employing fully discontinuous polynomial spaces on mesh elements and faces, HHO methods seamlessly handle arbitrary polyhedral grids and provide dimension-independent constructions.

The primary contribution of this paper is the design and analysis of a gauge-invariant HHO method for the Schr\"{o}dinger equation. We achieve this by introducing a \textit{discrete covariant gradient} that intrinsically respects the coupling between the spatial derivative and the magnetic potential. Our approach offers the following main features:
\begin{inparaenum}[(i)]
    \item \textbf{Local construction:} The discrete covariant gradient is computed element-wise, on a general $d$-dimensional element, with a controlled accuracy.
    \item \textbf{Asymptotically discrete gauge covariance:} We prove that under a discrete equivalent of the gauge transformation, the local reconstructions and the global bilinear form behave covariantly, ensuring that the physical observables are invariant asymptotically.
    \item \textbf{Discrete G\aa rding inequality:} Due to the magnetic potential, the standard coercivity of the operator is lost. We bypass this issue by proving a gauge-invariant discrete G\aa rding inequality, which guarantees that the discrete system possesses a stable ground state and a spectrum bounded from below.
\end{inparaenum}
The rest of the paper is organized as follows. In \cref{sec:continuous}, we recall the continuous problem, establish the continuous G\aa rding inequality, and review the principles of gauge invariance. \Cref{sec:hho} introduces the HHO discretization, details the construction of the discrete covariant gradient, and presents the main theoretical proofs regarding gauge covariance and stability. Finally, \cref{sec:numerics} is devoted to numerical experiments on 2D and 3D unstructured meshes, validating the theoretical optimal convergence rates and the exactness of the discrete gauge invariance on the computation of the Fock-Darwin spectrum and the long-time behavior of the system.

\section{The Covariant Derivative and the Schr\"{o}dinger Equations}
\label{sec:continuous}

Let \(\vA\) denote a vector potential. The central focus of this paper is the covariant derivative operator \((i\GRAD - \vA)\). This operator naturally emerges in quantum mechanics when considering charged particles in a magnetic field \(\vB\), where \(B = \ROT \vA\), as dictated by the Maxwell-Thomson law. We shall explore two pertinent problems involving this operator: the stationary and unsteady Schr\"{o}dinger equations. For the sake of simplicity, we set \(\hbar = m = q = 1\) (the physical constants can be reintroduced without difficulty). 

Let \(\Omega \subset \Real^d\) (where \(d = 2,3\)) be a Lipschitz bounded domain, and consider a quantum particle \(P\) characterized by its wave function \(\psi: [0,T] \times \DOM \to \Complex\). The probability of finding the particle within an infinitesimal box \(dxdy\) at time \(t\) is given by the quantity \(|\psi(t,x,y)|^2 dxdy\), such that $\int_{\DOM}|\psi(t,x,y)|^2dxdy=1$ for any $t\in[0,T)$.

\paragraph{The stationary Schr\"{o}dinger Equation} Let \(\vA \in W^{1,\infty}(\Omega; \Real^d)\) be a vector potential and \(V \in L^\infty(\Omega; \Real)\) a scalar potential defined over \(\DOM\). The stationary Schr\"{o}dinger equation is expressed as an eigenvalue problem: find eigenpairs $\psi$ and $\lambda$, s.t. 
\begin{equation}
\begin{cases}
\bigl(-i\GRAD - \vA\bigr)^2 \psi + V\psi = \lambda \psi & \text{in } \Omega, \\
\psi = 0 & \text{on } \partial\Omega.
\end{cases}
\label{eq:Schrodinger.stationnary}
\end{equation}
Here, \(\bigl(-i\GRAD - \vA\bigr)^2\) is interpreted in the sense of $\cG^*\cG$, where $\cG$ and $\cG^*$ are defined as 
\[
  \cG u:=(-i\GRAD u -\vA u \bigr), \qquad \cG^*\vtau =-i\DIV \vtau - \vA \cdot \vtau,
\]
for any $u\in H^1_0(\DOM; \Complex)$ and $\vtau$ in $H^1_0(\DOM;\Complex)^d$.
The eigenvalues are guaranteed to be real, as $\cG^*\cG$ is self-adjoint; however, they may be nonpositive, with the lowest eigenvalue $\lambda_0$ being particularly significant in applications. 
The variational formulation entails identifying \(\lambda \in \Real\) and \(\psi \in H_0^1(\Omega; \Complex)\) such that 
\begin{equation}
a(\psi, \phi; \vA) = \lambda (\psi, \phi)_{L^2(\Omega)} \quad \forall \phi \in H_0^1(\Omega; \Complex), \qquad \|\psi\|_{L^2(\Omega)} = 1,
\end{equation}
where the sesquilinear form $a(\cdot,\cdot;\vA)$ is defined as
\begin{equation}
  a(\psi,\phi; \vA) \eqbydef \int_{\Omega} \cG u \cdot \overline{\cG v} \, dx + \int_{\Omega} V u \overline{v} \, dx.
  \label{eq:sesquilinear_form}
\end{equation}
We will simply write $a(\psi,\phi)$ to refer to $a(\psi, \phi; \vA)$ when the context is unambiguous.

\paragraph{The unsteady Schr\"{o}dinger Equation} Let \(\psi_0\) represent an initial quantum state and \(f \in L^2(\DOM; \Complex)\) denote a forcing term. The unsteady (or time-dependent) Schr\"{o}dinger equation is formulated as follows: find $\psi$ s.t.
\begin{equation}
\begin{cases}
i\partial_t \psi + \bigl(-i\GRAD - \vA\bigr)^2 \psi + V\psi = f & \text{in } \Omega, \\
\psi(0) = \psi_0 & \text{on } \partial\Omega.
\end{cases}
\label{eq:Schrodinger.time}
\end{equation}

\subsection{G\aa rding Inequality and Spectrum}

Before discussing the gauge invariance, we establish a fundamental property of the sesquilinear form \eqref{eq:sesquilinear_form}.
Although the form $a(\cdot, \cdot)$ is not strictly coercive due to the presence of the vector potential $\mathbf{A}$, it satisfies a G\aa rding inequality \cite{Garding1953}.
This property ensures that the quantum system has a stable ground state energy.

\begin{lemma}[G\aa rding Inequality]
  \label{lem:garding}
There exist constants $\alpha > 0$ and $\gamma \ge 0$, depending on $\|\mathbf{A}\|_{L^\infty(\Omega)}$ and $\|V\|_{L^\infty(\Omega)}$, such that for all $u \in H_0^1(\Omega;\mathbb{C})$,
\[
  a(u, u) \ge \alpha \|\nabla u\|_{L^2(\Omega)}^2 - \gamma \|u\|_{L^2(\Omega)}^2.
\]
\end{lemma}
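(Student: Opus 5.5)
We expand $a(u,u)$ directly and absorb the cross term with Young's inequality.

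\textbf{Expansion.} Since $\cG u = -i\GRAD u - \vA u$ and $\vA$ is real valued, we have pointwise
\[
|\cG u|^2 = |\GRAD u|^2 + |\vA|^2|u|^2 + 2\,\mathrm{Re}\bigl( -i\GRAD u\cdot \overline{(-\vA u)}\bigr) = |\GRAD u|^2 + |\vA|^2|u|^2 + 2\,\mathrm{Re}\bigl( i\,\vA\cdot \GRAD u\,\overline{u}\bigr).
\]
Integrating over $\Omega$ gives
\[
a(u,u) = \|\GRAD u\|_{L^2(\Omega)}^2 + \|\vA u\|_{L^2(\Omega)}^2 + 2\,\mathrm{Re}\int_\Omega i\,\vA\cdot\GRAD u\,\overline{u}\,dx + \int_\Omega V|u|^2\,dx .
\]

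\textbf{Lower bounds on each term.} The second term is nonnegative and can be dropped. For the potential term we use
\[
\int_\Omega V|u|^2\,dx \ge -\|V\|_{L^\infty(\Omega)}\|u\|^2_{L^2(\Omega)} .
\]
This is the only place $V$ enters.

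\textbf{Cross term (the only step requiring care).} By Cauchy--Schwarz and Young's inequality with parameter $\varepsilon>0$,
\[
\Bigl|2\,\mathrm{Re}\int_\Omega i\,\vA\cdot\GRAD u\,\overline{u}\,dx\Bigr| \le 2\|\vA\|_{L^\infty(\Omega)}\|\GRAD u\|_{L^2(\Omega)}\|u\|_{L^2(\Omega)} \le \varepsilon\|\GRAD u\|^2_{L^2(\Omega)} + \varepsilon^{-1}\|\vA\|^2_{L^\infty(\Omega)}\|u\|^2_{L^2(\Omega)}.
\]
Choosing $\varepsilon=\tfrac12$ and collecting terms yields the claim with
\[
\alpha=\tfrac12,\qquad \gamma = 2\|\vA\|^2_{L^\infty(\Omega)} + \|V\|_{L^\infty(\Omega)} .
\]

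\textbf{Sharper variant.} A cleaner route gives $\alpha$ closer to $1$. Write $\|\GRAD u\| = \|i\GRAD u\| \le \|\cG u\| + \|\vA\|_{L^\infty}\|u\|$, square, and apply $(x+y)^2\le(1+\delta)x^2+(1+\delta^{-1})y^2$. This gives
\[
\|\cG u\|^2 \ge (1+\delta)^{-1}\|\GRAD u\|^2 - \delta^{-1}\|\vA\|_{L^\infty}^2\|u\|^2 .
\]
Either route works. Neither uses the boundary condition beyond the fact that $u\in H^1_0$ ensures all integrals are finite. The stated dependence of $\alpha,\gamma$ only on $\|\vA\|_{L^\infty}$ and $\|V\|_{L^\infty}$ is then immediate.

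There is no real obstacle here. The only care needed is with the complex conjugation in the cross term: it is purely a real part, so it is bounded in absolute value exactly as above.
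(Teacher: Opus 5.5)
Your proof is correct and is essentially the paper's argument: the paper applies the pointwise inequality $|a-b|^2 \ge \tfrac12|a|^2 - |b|^2$ with $a=-i\nabla u$ and $b=\mathbf{A}u$. That inequality is exactly your expansion combined with Young's inequality at $\varepsilon=\tfrac12$, and it yields $\alpha=\tfrac12$. The only difference is that you discard $\|\mathbf{A}u\|_{L^2(\Omega)}^2$ instead of letting it absorb part of the Young remainder, so you get $\gamma = 2\|\mathbf{A}\|_{L^\infty(\Omega)}^2 + \|V\|_{L^\infty(\Omega)}$ rather than the paper's $\|\mathbf{A}\|_{L^\infty(\Omega)}^2 + \|V\|_{L^\infty(\Omega)}$; your sharper variant with $\delta=1$ recovers the paper's constants exactly.
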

\begin{proof}
By definition of the sesquilinear form, and taking the real part (which is equal to the form itself since it is Hermitian), we have:
$$a(u,u) = \int_{\Omega} |(-i\nabla - \mathbf{A})u|^2 dx + \int_{\Omega} V |u|^2 dx.$$
Using the elementary inequality $|a - b|^2 \ge \tfrac{1}{2}|a|^2 - |b|^2$ for vectors in $\mathbb{C}^d$, we can lower bound the kinetic energy term:
$$|(-i\nabla - \mathbf{A})u|^2 \ge \frac{1}{2}|\nabla u|^2 - |\mathbf{A}u|^2.$$
Integrating this inequality over $\Omega$ and bounding the potential terms by their $L^\infty$ norms yields:
$$a(u,u) \ge \tfrac{1}{2} \|\nabla u\|_{L^2(\Omega)}^2 - \|\mathbf{A}\|_{L^\infty(\Omega)}^2 \|u\|_{L^2(\Omega)}^2 - \|V\|_{L^\infty(\Omega)} \|u\|_{L^2(\Omega)}^2.$$
The result follows by choosing $\alpha = \frac{1}{2}$ and $\gamma = \|\mathbf{A}\|_{L^\infty(\Omega)}^2 + \|V\|_{L^\infty(\Omega)}$.
\end{proof}
\begin{remark}[Boundedness of the spectrum]
This inequality directly implies that the spectrum of the continuous problem is bounded from below. Indeed, if $(\lambda, \psi)$ is a solution to the eigenvalue problem with $\|\psi\|_{L^2(\Omega)} = 1$, then $\lambda = a(\psi, \psi)$. Applying the G\aa rding inequality, we obtain:
$$\lambda \ge \frac{1}{2} \|\nabla \psi\|_{L^2(\Omega)}^2 - \gamma \|\psi\|_{L^2(\Omega)}^2 \ge -\gamma.$$
The minimal energy of the system is therefore bounded by below by $-\gamma$. The discrete covariant gradient operator presented in \cref{sec:discrete.gradient} is specifically designed to preserve this property.
\end{remark}

\subsection{Gauge Invariance}

\begin{definition}
  \label{def:continuous_gauge_transformation}
  Let \(\chi \in W^{2,\infty}(\Omega;\Real)\) be an arbitrary gauge function. We define the \textit{gauge transformation} for any smooth function $\psi$ and vector field $\vA$ as
  \begin{equation}
    \psi_{\chi} := \psi \, e^{i\chi}, \qquad \vA[\chi] := \vA + \GRAD\chi.
  \end{equation}
  
\end{definition}

\begin{lemma}[Continuous Gauge Covariance]
  \label{lem:continuous_gauge_covariance}
  We have the following compatibilities with respect to gauge transformation,
\[
  \cGchi\psi_{\chi} = e^{i\chi} \cG\psi, \quad    e^{i\chi} \cG^* (e^{-i\chi} \vtau) = \cGchi^* \vtau,
\]
holds for any smooth functions $\psi$ and $\vtau$, almost everywhere in \(\Omega\). Consequently, the bilinear form $a(\cdot,\cdot)$, the probability density $|\psi|^2$, and the spectrum remain invariant under this transformation.
\end{lemma}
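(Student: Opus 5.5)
The plan is a direct computation based on the product rule, followed by a transfer of the pointwise identities to the sesquilinear form and to the eigenvalue problem. Throughout I write $\cGchi u := -i\GRAD u - \vA[\chi]u$ and $\cGchi^*\vtau := -i\DIV\vtau - \vA[\chi]\cdot\vtau$ for the operators associated with the transformed potential $\vA[\chi]$ of \cref{def:continuous_gauge_transformation}.

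\textbf{Step 1 (first identity).} By the product rule, $\GRAD(\psi e^{i\chi}) = e^{i\chi}(\GRAD\psi + i\psi\GRAD\chi)$. Hence
\[
-i\GRAD\psi_\chi = e^{i\chi}\bigl(-i\GRAD\psi + \psi\GRAD\chi\bigr).
\]
Subtracting $(\vA+\GRAD\chi)\psi e^{i\chi}$, the two $\psi\GRAD\chi$ terms cancel, which gives $\cGchi\psi_\chi = e^{i\chi}\cG\psi$. The sign conventions in $\cG$ and in $\vA[\chi]=\vA+\GRAD\chi$ are exactly the ones that make this cancellation occur, so I would check them carefully at this point.

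\textbf{Step 2 (adjoint identity).} In the same way,
\[
\DIV(e^{-i\chi}\vtau) = e^{-i\chi}\bigl(\DIV\vtau - i\GRAD\chi\cdot\vtau\bigr).
\]
Therefore
\[
e^{i\chi}\cG^*(e^{-i\chi}\vtau) = -i\DIV\vtau - \GRAD\chi\cdot\vtau - \vA\cdot\vtau = \cGchi^*\vtau.
\]
Both identities are pointwise algebraic consequences of the chain rule. They hold almost everywhere as soon as $\psi\in H^1$ and $\chi\in W^{1,\infty}$, so the assumption $\chi\in W^{2,\infty}$ is more than enough.

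\textbf{Step 3 (consequences).}
\begin{itemize}
\item \emph{Sesquilinear form.} Insert Step 1 into \eqref{eq:sesquilinear_form}. Since $|e^{i\chi}|=1$,
\[
\cGchi\psi_\chi\cdot\overline{\cGchi\phi_\chi} = \cG\psi\cdot\overline{\cG\phi}, \qquad V\psi_\chi\overline{\phi_\chi} = V\psi\overline{\phi}.
\]
Hence $a(\psi_\chi,\phi_\chi;\vA[\chi]) = a(\psi,\phi;\vA)$.
\item \emph{Density.} Immediately, $|\psi_\chi|^2 = |\psi|^2$.
\item \emph{Spectrum.} Consider the multiplication map $U_\chi:u\mapsto e^{i\chi}u$.
  \begin{itemize}
  \item It is unitary on $L^2(\Omega;\Complex)$.
  \item It maps $H^1_0(\Omega;\Complex)$ bijectively onto itself, with inverse $U_{-\chi}$. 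This uses $\chi\in W^{1,\infty}$ and the product rule in $H^1$; traces are preserved because $|e^{i\chi}|=1$.
  \end{itemize}
  Now let $(\lambda,\psi)$ be an eigenpair for $\vA$. For any test function $\phi$, write $\phi=U_\chi\varphi$ with $\varphi=U_{-\chi}\phi\in H^1_0$. By form invariance and unitarity,
  \[
  a(\psi_\chi,\phi;\vA[\chi]) = a(\psi,\varphi;\vA) = \lambda(\psi,\varphi)_{L^2} = \lambda(\psi_\chi,\phi)_{L^2},
  \]
  and $\|\psi_\chi\|_{L^2}=1$. So $(\lambda,\psi_\chi)$ is an eigenpair for $\vA[\chi]$. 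Applying the same argument with $-\chi$ gives the converse inclusion, so the two spectra coincide.
\end{itemize}

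\textbf{Main obstacle.} The computations themselves are routine. The only delicate point is passing from the "smooth functions" of the statement to the variational setting. Two things need justification: that $U_\chi$ preserves $H^1_0$, and that the product rule holds for $H^1$ functions multiplied by the Lipschitz function $e^{i\chi}$. I would handle both either by a density argument, approximating $\psi$ by $C_c^\infty$ functions and using continuity of both sides of Step 1 in $H^1$, or by invoking the standard product rule for $W^{1,\infty}\times H^1$ directly.
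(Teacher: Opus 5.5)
Your proposal is correct and follows the same route as the paper: a product-rule computation for $-i\nabla(\psi e^{i\chi})$ and $\nabla\cdot(e^{-i\chi}\vtau)$, followed by cancellation of the unimodular phase in the sesquilinear form. Your unitary-equivalence argument with $u\mapsto e^{i\chi}u$ on $H^1_0(\Omega;\Complex)$ also makes explicit the spectral invariance, which the paper only asserts. One small correction: $H^1_0$ is preserved because $e^{i\chi}$ is Lipschitz, whereas $|e^{i\chi}|=1$ is what gives unitarity on $L^2$.
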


\begin{proof}
A direct computation gives $-i\GRAD(\psi e^{i\chi}) = e^{i\chi} (-i\GRAD\psi + \psi \GRAD\chi)$. Thus,
\[
(-i\GRAD - \vA - \GRAD\chi)(\psi e^{i\chi}) = e^{i\chi} (-i\GRAD - \vA)\psi \Longleftrightarrow \cGchi \psi_{\chi} = e^{i\chi}\cG \psi.
\]
The adjoint covariant operator satisfies the exact phase-shifting identity:
\begin{align*}
  e^{i\chi} \cG^* (e^{-i\chi} \vtau) &= e^{i\chi} \Big( -i\GRAD\cdot (e^{-i\chi} \vtau) - \vA \cdot (e^{-i\chi} \vtau) \Big) \\
  &= e^{i\chi} \Big( -\GRAD\chi \cdot (e^{-i\chi} \vtau) - i e^{-i\chi} \GRAD\cdot \vtau - \vA \cdot (e^{-i\chi} \vtau) \Big) \\
  &= -i\GRAD\cdot \vtau - (\vA + \GRAD\chi) \cdot \vtau = \cGchi^* \vtau.
\end{align*}
From the first equality, we can readily infer $|\psi_{\chi}|^2=|\psi|^2$ and 
\begin{align*}
  a(\psi_{\chi}, \phi_{\chi}; \vA[\chi]) &= \int_{\DOM}\cGchi \psi_{\chi} \cdot \overline{\cGchi \phi_{\chi}}dx +  \int_\DOM V \psi_{\chi} \overline{\phi_{\chi}} dx \\
                        &= \int_{\DOM}e^{i\chi}\cG[\vA]\psi \cdot e^{-i\chi}\overline{\cG[\vA]\phi}dx +  \int_\DOM V e^{i\chi}\psi e^{-i\chi}\overline{\phi} dx = a(\psi,\phi; \vA). 
\end{align*}
\end{proof}

\section{The Hybrid High-Order Discretization}
\label{sec:hho}

In the following discussion, we give a succinct overview of the Hybrid High-Order (HHO) method and expand its discrete gradient to include the discrete counterpart of the covariant gradient $\cG$. To ensure consistency with the notation commonly used in the HHO literature, we will denote the solution to the primary problem as $u$, reserving the symbol $\psi$ for contexts where the focus is predominantly on the physical aspects.

\subsection{Admissible meshes and local polynomial spaces}
\label{sec:mesh.polynomial.space}

Denote by ${\cH}\subset \Real_*^+ $ a countable set of meshsizes having $0$ as its unique accumulation point.
We consider $h$-refined spatial mesh sequences $(\Th)_{h \in \cH}$ where, for all $ h \in \cH $, $\Th=\{T\}$ is a polytopal tessellation of $\DOM$ such that $h=\max_{T\in\Th} h_T$ with $h_T$ standing for the diameter of the tile $T$.
We assume that mesh regularity holds in the sense of~\cite[Definition~1.9]{DiPietro2020}; cf.~\cite[Chapter~1]{DiPietro2020} for a collection of useful geometric and functional inequalities that hold under this assumption.

We define a mesh face $F$ as a hyperplanar closed connected subset of $\bar{\Omega}$ with positive $ (d{-}1) $-dimensional Hausdorff measure and such that%
\begin{inparaenum}[(i)]
\item either there exist $T_1,T_2\in\Th $ such that $F\subset\partial
  T_1\cap\partial T_2$ and $F$ is called an interface or 
\item there exists $T\in\Th$ such that $F\subset\partial T\cap\partial\Omega$ and $F$ is called a boundary face.
\end{inparaenum}
Interfaces are collected in the set $\Fhi$, boundary faces in $\Fhb$, and we let $\Fh\eqbydef\Fhi\cup\Fhb$.
For all $T\in\Th$, $\Fh[T]\eqbydef\{F\in\Fh\st F\subset\partial T\}$ denotes the set of faces contained in $\partial T$ and, for all $F\in\Fh[T]$, $\vn$ is the unit normal to $F$ pointing out of $T$.

We assume throughout the rest of this work that the mesh sequence $(\Th)_\cH$ is \emph{admissible} in the sense of~\cite[Chapter~1]{DiPietro2020}.
\begin{definition}[Admissible mesh sequence]
  \label{def:adm.meshes}
  For all $h\in\cH$, $\Th$ admits a matching simplicial submesh $\fTh$ and the following properties hold for all $h\in\cH$ with mesh regularity parameter $\varrho>0$ independent of $h$:
\begin{inparaenum}[(i)]
\item  for all simplex $S\in\fTh$ of  diameter $h_S$ and inradius $r_S$, $\varrho h_S\le r_S$;
\item for all $T\in\Th$, and all   $S\in\fTh[T]\eqbydef\{S\in\fTh\ST S\subset T\}$,   $\varrho h_T \le h_S$.
\end{inparaenum}
\end{definition}

For an admissible mesh sequence, it is known from~\cite[Lemma~1.12]{DiPietro2020} that the number of faces of one tile can be bounded uniformly in $h$, i.e., it holds that
\begin{equation}
  \label{eq:Np}
  \forall h\in\cH,\qquad
  \max_{T\in\Th}\card{\Fh[T]} \le\Np,
\end{equation}
for an integer $(d+1)\le\Np <+\infty$ depending on $\varrho$ but independent of $h$.
Furthermore, for all $h \in \cH$, all $T \in \Th$ and all $F \in \Fh[T]$, $h_F$ is uniformly  comparable to $h_T$ in the following sense:
$$\rho^2 h_T \le h_F \le h_T.$$

Let $X$ be a subset of $\Real^{N}$, $N\ge 1$, $H_X$ the affine space spanned by $X$, $d_X$ its dimension, and assume that $X$ has a non-empty interior in $H_X$ (in what follows, the cases $X=T$ and $X=F$ are relevant).
For a given integer $k\ge 0$, we denote by $\Poly{k}(X;\Complex)$ the space spanned by $d_X$-variate complex-valued polynomials on $H_X$ of total degree $\le l$ and by $\lproj[X]{k}: L^1(X) \to \Poly{k}(X;\Complex)$ the $L^2$-orthogonal projector on $\Poly{k}(X;\Complex)$.
We define the global interpolation operator $I_h^k: H^1(\Omega) \to U_h^k$ such that its local restriction is $I_T^k u = (\pi_T^k u, \pi_{\partial T}^k u)$, where $\pi_{\partial T}^k$ is the piecewise $L^2$-orthogonal projector on the faces of $T$.
The usual $L^2(X)$-product and norm are denoted by $(u,v)_X:=\int_{X}u\overline{v}dx$ and $\norm[X]{{\cdot}}$, respectively, and we omit the index when $X=\Omega$.
We also recall the following local trace and inverse inequalities (cf.~\cite[Section 1.2.5]{DiPietro2020}):
For all $T\in\Th$ and all $v\in\Poly{k}(T)$,
\begin{equation}\label{eq:trace.inv}
  \norm[F]{v} \lesssim h_F^{- \frac12} \norm[T]{v}\mbox{ for all $F\in\Fh[T]$ and }
  \norm[T]{\GRAD v}\lesssim h_T^{-1}\norm[T]{v}.
\end{equation}

\subsection{The degrees of freedom spaces}
\label{sec:dof.spaces}

We follow the standard Hybrid High-Order formulation for the Poisson problem. For a polynomial degree \(k \geq 0\), the local space of degrees of freedom (DOF) on each cell \(T\) is
\[
\UT := \Poly{k}(T;\Complex) \times \prod_{F\subset \partial T} \Poly{k}(F;\Complex),
\]
The global DOF space is denoted \(\Uh\), with homogeneous Dirichlet boundary conditions stongly enforced:
\[
  \Uh := \left\{ \uuh=(\uuTF)_{T\in \Th}, \quad \uuTF\in \UT, \quad u_F = 0, \forall F\in \Fhb \right\}.
\]
We define the following norm $\norm[1,h]{\cdot}$ over $\Uh$ as as
\begin{equation}
    \label{eq:discrete.norms}
    \norm[1,h]{\uuh}^2:= \sum_{T\in \Th}\norm[1,T]{\uuTF}^2, \qquad  \norm[1,T]{\uuTF}^2:= \norm{\GRAD u_T}^2 + \sum_{F\in\FT} h_F^{-1}\norm{u_F-u_T}^2.
\end{equation}

\subsection{Potential Reconstruction}
\label{sec:potential.reconstruction}

For \(\uuTF \in \UT\), the reconstructed potential \(\pT \uuTF \in \Poly{k+1}(T;\Complex)\) is defined such that for all \(w \in \Poly{k+1}(T;\Complex)\),
\begin{equation}
  \label{eq:pT.def}
(\GRAD \pT \uuTF, \GRAD w)_T = -(\uT, \Delta w)_T + \sum_{F\subset\partial T} (\uF, \GRAD w\cdot\vn)_F,
\end{equation}
or equivalently
\begin{equation}
  \label{eq:pT.equivalent}
  (\GRAD \pT \uuTF, \GRAD w)_T = (\GRAD \uT, \GRAD w)_T + \sum_{F\subset\partial T} (\uF-\uT, \GRAD w\cdot\vn)_F,
\end{equation}
closed with the normalization $(\pT\uuTF - u_T, 1)_T = 0$.
We extend this operator globally to $\Uh$ by applying it element-wise, namely we define $\ph:\Uh \to \Poly{k+1}(\DOM; \Complex)$ such that for any $T\in\Th$, 
\[
  (\ph \uuh)_{|T} := \pT \uuTF.
\]

\begin{proposition}
  \label{prop:pT.coercive}
  There exists $C > 0$ independent from $h$, but from the number of faces and the
  trace constant $C_{tr}$ such that
  \begin{equation}
    \norm{\GRAD \pT \uuTF}^2 + s_T(\uuTF, \uuTF) \geq C\norm{\GRAD \uT}^2.
  \end{equation}
\end{proposition}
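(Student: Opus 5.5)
The plan is the classical norm-equivalence argument of HHO: test the definition \eqref{eq:pT.equivalent} of $\pT$ with $w=\uT$, and control the resulting face terms by the stabilisation together with $\norm{\GRAD\pT\uuTF}$. Since $s_T$ is not written out at this point in the paper, I take it to be the standard HHO stabilisation
\[
  s_T(\uuTF,\uuTF)=\sum_{F\in\FT} h_F^{-1}\norm[F]{\lproj[F]{k}(\delta_F-\delta_T)}^2,
\]
with $\delta_T:=\lproj[T]{k}\pT\uuTF-\uT$ and $\delta_F:=\lproj[F]{k}\pT\uuTF-\uF$. Any other choice satisfying the usual stability hypothesis would work the same way. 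Throughout, $\norm{\cdot}$ denotes $\norm[T]{\cdot}$.

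\emph{Step 1 (testing with $\uT$).} Since $\uT\in\Poly{k}(T;\Complex)\subset\Poly{k+1}(T;\Complex)$, we may take $w=\uT$ in \eqref{eq:pT.equivalent}:
\[
  \norm{\GRAD\uT}^2=(\GRAD\pT\uuTF,\GRAD\uT)_T-\sum_{F\in\FT}(\uF-\uT,\GRAD\uT\cdot\vn)_F .
\]
Apply Cauchy--Schwarz to each term. On the faces, use the discrete trace inequality \eqref{eq:trace.inv} applied to $\GRAD\uT$, which gives $\norm[F]{\GRAD\uT}\lesssim h_F^{-1/2}\norm{\GRAD\uT}$. Then use \eqref{eq:Np} to bound the number of faces. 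This yields
\[
  \norm{\GRAD\uT}\lesssim \norm{\GRAD\pT\uuTF}+\Big(\sum_{F\in\FT}h_F^{-1}\norm[F]{\uF-\uT}^2\Big)^{1/2}.
\]
The hidden constant depends only on $C_{tr}$, $\Np$ and $\varrho$.

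\emph{Step 2 (controlling the jumps).} This is the main step. On each face $F$ we have $\lproj[F]{k}\uT=\uT$, because $\uT|_F$ is a polynomial of degree at most $k$ on $F$. A direct computation then gives the exact decomposition
\[
  \uF-\uT=-\lproj[F]{k}(\delta_F-\delta_T)+\lproj[F]{k}\bigl(\pT\uuTF-\lproj[T]{k}\pT\uuTF\bigr).
\]
The first term is controlled directly by $s_T$. For the second term, use in turn:
\begin{itemize}
\item the $L^2(F)$-stability of $\lproj[F]{k}$;
\item a continuous trace inequality;
\item the approximation properties of $\lproj[T]{k}$ on the polynomial $\pT\uuTF$ of degree $k+1$, available on admissible meshes \cite{DiPietro2020}.
\end{itemize}
Together these give
\[
  h_F^{-1/2}\norm[F]{\pT\uuTF-\lproj[T]{k}\pT\uuTF}\lesssim \norm{\GRAD\pT\uuTF}.
\]
Summing over the at most $\Np$ faces, we obtain
\[
  \sum_{F\in\FT}h_F^{-1}\norm[F]{\uF-\uT}^2\lesssim s_T(\uuTF,\uuTF)+\norm{\GRAD\pT\uuTF}^2 .
\]

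\emph{Step 3 (conclusion).} Insert the bound of Step 2 into Step 1 and square. Using $(a+b)^2\le 2a^2+2b^2$, we get $\norm{\GRAD\uT}^2\le C^{-1}\bigl(\norm{\GRAD\pT\uuTF}^2+s_T(\uuTF,\uuTF)\bigr)$. The constant $C$ depends only on $\varrho$, $\Np$ and $C_{tr}$, and not on $h$, which is the claim. As a by-product, Step 2 shows that the full local norm $\norm[1,T]{\uuTF}$ is controlled as well.

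The only delicate point is the decomposition in Step 2. It requires that $s_T$ penalise exactly the difference $\delta_F-\delta_T$ and that $\lproj[F]{k}\uT=\uT$ on each face. If the paper's stabilisation is of another type, for instance the plain jump $h_F^{-1}\norm[F]{\uF-\uT}^2$, then Step 2 becomes trivial and Step 1 alone suffices.
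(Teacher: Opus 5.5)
Your proof is correct. Steps 1 and 3 coincide with the paper's proof, which also tests \eqref{eq:pT.equivalent} with $w=u_T$, then applies Cauchy--Schwarz, the discrete trace inequality and the bound $\Np$ on the number of faces, and squares.

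The real difference is your Step 2, which the paper does not contain. The paper bounds $\bigl(\sum_{F\in\FT}h_F^{-1}\norm[F]{u_F-u_T}^2\bigr)^{1/2}$ directly by $s_T(\uuTF,\uuTF)^{1/2}$, so it tacitly treats $s_T$ as the plain jump penalty. For the stabilisation $\sum_{F\in\FT}h_F^{-1}\norm[F]{(\delT[TF]-\delT)\uuTF}^2$ actually adopted in \cref{sec:discrete.gradient}, that inequality fails in general. For $\uuTF=\IT w$ with $w\in\Poly{k+1}(T)$ one has $s_T(\uuTF,\uuTF)=0$, while $u_F-u_T=\lproj[F]{k}w-(\lproj[T]{k}w)_{|F}\neq0$ in general. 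Your decomposition $u_F-u_T=-\lproj[F]{k}(\delta_F-\delta_T)+\lproj[F]{k}(\pT\uuTF-\lproj[T]{k}\pT\uuTF)$, together with the estimate $h_F^{-1/2}\norm[F]{\pT\uuTF-\lproj[T]{k}\pT\uuTF}\lesssim\norm[T]{\GRAD\pT\uuTF}$, is exactly the missing link.

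Two further points favour your version. Your by-product, control of the full $\norm[1,T]{\uuTF}$, is what the paper's proof asserts in its last line and later relies on in \cref{lem:GT.zero.coercice}. Your Step 1 also keeps the term $\norm[T]{\GRAD\pT\uuTF}$, which the paper's intermediate bound $\norm[T]{\GRAD u_T}\lesssim s_T(\uuTF,\uuTF)^{1/2}$ drops.

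As for what each route buys: the paper's shortcut would give an explicit constant depending only on $C_{tr}$ and $\Np$, but only for a jump-type stabilisation. Your route covers the stabilisation really in use. The price is harmless: the constant also depends on $\varrho$ and $k$ through the approximation properties of $\lproj[T]{k}$.
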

\begin{proof}
  If $\uT$ is constant over $T$, the proof is valid as the LHS is always positive.
  We now assume $\uT$ is not constant over $T$. Using \eqref{eq:pT.equivalent}, we have
  \[
    \norm{\GRAD \uT}^2
    = (\GRAD \pT \uuTF,\, \GRAD \uT)_T
      - \sum_{F \subset \partial T} (\uF - \uT,\, \GRAD \uT \cdot \vn)_F
    = \term[1] +\term[2]
  \]
  where the first term $\term[1]$ can be readily bounded using Cauchy-Schwarz
  \[
    (\GRAD \pT \uuTF,\, \GRAD \uT)_T
    \leq \norm[T]{\GRAD \pT \uuTF}\norm[T]{\GRAD u},
  \]
  and the second term $\term[2]$ can be bounded as
  \begin{align*}
    \term[2]
    &\leq \sum_F \norm[F]{\uF - \uT}\norm[F]{\GRAD u \cdot \vn} \leq \left(\sum_F h_F^{-1}\norm[F]{\uF - \uT}^2\right)^{1/2}\!\left(\sum_F h_F\norm[F]{\GRAD \uT}^2\right)^{1/2} \\
    &\leq s_T(\uuTF, \uuTF)^{1/2} \left(\sum_F h_F^1 h_F^{-1} C_{tr}^2\norm[T]{\GRAD \uT}^2\right)^{1/2}  \leq s_T(\uuTF, \uuTF)^{1/2}\, C_{tr}\norm[T]{\GRAD \uT}\Np^{1/2} \\
    &\leq C_{tr}\Np^{1/2}\, s_T(\uuTF, \uuTF)^{1/2}\norm[T]{\GRAD \uT}.
  \end{align*}
  Hence, dividing by $\norm{\GRAD \uT} \ne 0$, we have
  $\norm[T]{\GRAD \uT} \lesssim s_T(\uuTF, \uuTF)^{1/2}$.
  Putting squares on the above inequality and using $(a+b)^2 \leq 2a^2 + 2b^2$
  and adding the stabilization terms, we finally obtain
  \[
    C\norm[1,T]{\uuTF}^2 \leq \norm[T]{\GRAD \pT \uuTF}^2 + s_T(\uuTF, \uuTF),
  \]
  with $C := \dfrac{1}{\max(2,\, 2C_{tr}\Np^{1/2}+1)}$, independent of $h$.
\end{proof}

\subsection{Discrete Covariant Gradient and Bilinear Form}
\label{sec:discrete.gradient}

Let $\vA \in L^{\infty}(\DOM)^d$ any smooth vector potential and $\vA_{\chi} \in L^{\infty}(\DOM)^d$ its transformation with $\chi \in W^{1,\infty}(\overline{\DOM})$ a real-valued gauge phase.
Denoting $\uvh = (\uuTF)_{T\in \Th}\in \Uh$ an arbitrary DOF, $\uvh[\chi]$ and $\vA[T,\chi]$ the DOFs, and discrete vector potential, after a \textit{discrete gauge transformation} transformation as
\[
  \uvTF[\chi]:= \left( \projT(e^{i\chi} v_T), \projF(e^{i\chi} v_F) \right), \quad \vA[\chi,T]:=\projT \vA[\chi].
\]
We define the \textit{discrete covariant gradient} $\GT[\vA]$, for any vector potential $\vA$, as 
  \begin{equation}
  \label{eq:def.GT}
  (\GT[\vA] \uuTF, \vtau)_T = (u_T, \cG^*\vtau)_T - i\sumF (v_F, \vtau\cdot \vn)_F, \qquad \forall \vtau\in\Poly{k}(T)^d,
\end{equation}
or equivalently, for any $\forall \vtau\in\Poly{k}(T)^d$,
\begin{equation}
  \label{eq:def.GT.adjoint}
  (\GT[\vA] \uuTF, \vtau)_T = (\cG u_T, \vtau)_T - i\sumF (v_T-v_F, \vtau\cdot \vn)_F.
\end{equation}  

The local discrete bilinear form $a_T$, for any cell element $T\in\Th$, is defined using the $L^2$ projection of $\vA$, as 
\begin{equation}
  a_T(\uuTF, \uvTF; \vAT) \eqbydef \bigl(\GT \uuTF, \GT \uvTF \bigr)_{L^2(T)} + s_T(\uuTF, \uvTF),
  \label{eq:aT.defn}
\end{equation}
where \(s_T(\cdot,\cdot)\) denotes a suitable HHO stabilization, in the sense of \cite[Assumption 2.4, p. 49]{DiPietro2020}, for instance the choice from \cite[Eq. (2.22)]{DiPietro2020}:
\[
  s_T(\uuTF, \uvTF) := \sumF h_F^{-1}((\delT[TF] - \delT)\uuTF, (\delT[TF] - \delT)\uvTF)_F. 
\]
where $\delT\uuTF:= \projT(\pT \uvTF - \vT)$ and $\delT[TF]\uuTF := \projF(\pT \uvTF - \vF)$ for all $F\in \FT$.
The global bilinear form over $\Th$ is defined as the sum of all the local contributions \eqref{eq:aT.defn}, i.e. 
\begin{align}
  \label{eq:def.ah}
  a_h(\uuh, \uvh; \vA[h]) &:= \sumT a_T(\uuTF, \uvTF; \vAT),
\end{align}
where $\vA[h]:=\projh (\vA)=(\projT \vA)_{T\in\Th}$ is the $L^2$ projection of $\vA$ over all $T\in\Th$.

\begin{theorem}[Discrete Gauge Covariance]
\label{thm:gauge_covariance}
Let $\chi \in C^\infty(\overline{\DOM})$ be a smooth real-valued gauge phase, and $\GT[\chi]:=\GT[{\vAT[\chi]}]$. Then, for any $\uvh \in \Uh$, we have
\[
  ( \GT[\chi] (\uvTF[\chi]), \vtau )_T = ( \GT[\vA] \uvTF, \projT(e^{-i\chi} \vtau) )_T + \Rgauge (\uvTF, \vtau) \qquad \forall \vtau \in \Poly{k}(T)^d,
\]
where the remainder $\Rgauge$ gathers all projection commutators and is bounded by:
\[
  |\Rgauge(\uvTF, \vtau)| \le C h_T^{k+1} \|\uvTF\|_{1,T} \|e^{-i\chi}\vtau\|_{H^{k+1}(T)}.
\]
\end{theorem}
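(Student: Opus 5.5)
The plan is to expand the left-hand side with definition \eqref{eq:def.GT}, insert the exact continuous identity of \cref{lem:continuous_gauge_covariance} for the adjoint, and collect every discrepancy caused by the projections into $\Rgauge$. Fix $\vtau\in\Poly{k}(T)^d$ and set $\vsigma:=e^{-i\chi}\vtau$, which is smooth but not polynomial. By \eqref{eq:def.GT} applied with potential $\vAT[\chi]$ to $\uvTF[\chi]$,
\[
(\GT[\chi]\uvTF[\chi],\vtau)_T=(\projT(e^{i\chi}v_T),\cGchi^*\vtau)_T-i\sumF(\projF(e^{i\chi}v_F),\vtau\cdot\vn)_F ,
\]
where $\cGchi^*$ uses $\vAT[\chi]$. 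The target is $(\GT[\vA]\uvTF,\projT\vsigma)_T$. Expanding it with \eqref{eq:def.GT} gives $(v_T,\cG^*\projT\vsigma)_T-i\sumF(v_F,\projT\vsigma\cdot\vn)_F$. On the other side, if the projectors are removed, the identity $e^{i\chi}\cG^*(e^{-i\chi}\vtau)=\cGchi^*\vtau$ gives exactly $(v_T,\cG^*\vsigma)_T-i\sumF(v_F,\vsigma\cdot\vn)_F$, after moving $e^{i\chi}$ across the $L^2$ products (it is unimodular). Hence $\Rgauge$ is the sum of three groups of terms.

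\textbf{(a) Projector removal on the left.} The term $(\projT(e^{i\chi}v_T)-e^{i\chi}v_T,\cGchi^*\vtau)_T$ reduces, after adding and subtracting $\projT$ on $\cGchi^*\vtau$ (whose divergence part is already in $\Poly{k-1}$), to $((I-\projT)(e^{i\chi}v_T),(I-\projT)(\vAT[\chi]\cdot\vtau))_T$. The face term $(\projF(e^{i\chi}v_F),\vtau\cdot\vn)_F$ is exact because $\vtau\cdot\vn\in\Poly{k}(F)$.

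\textbf{(b) Projected potential versus exact potential.} The differences $(\vAT[\chi]-\vA[\chi])\cdot\vtau$ and $(\vAT-\vA)\cdot\vsigma$ are handled by approximation estimates for $\projT$.

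\textbf{(c) Smooth test function versus polynomial test function.} The term $(v_T,\cG^*(\vsigma-\projT\vsigma))_T-i\sumF(v_F,(\vsigma-\projT\vsigma)\cdot\vn)_F$ would be rewritten as $(\cG v_T,\vsigma-\projT\vsigma)_T-i\sumF(v_T-v_F,(\vsigma-\projT\vsigma)\cdot\vn)_F$ by integrating by parts element-wise, mirroring \eqref{eq:def.GT.adjoint}. Since $\GRAD v_T\in\Poly{k-1}(T)^d$, the gradient part is orthogonal to $\vsigma-\projT\vsigma$. What remains is the potential part together with the face jump part. The face part is bounded by Cauchy--Schwarz, the $h_F^{-1}$ weight in $\norm[1,T]{\cdot}$, and the continuous trace approximation estimate $\norm[F]{\vsigma-\projT\vsigma}\lesssim h_T^{k+1/2}|\vsigma|_{H^{k+1}(T)}$. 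This yields $h_T^{k+1}\norm[1,T]{\uvTF}\norm[H^{k+1}(T)]{\vsigma}$.

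\textbf{Main obstacle.} I expect the hardest part to be the zero-order terms in (a)--(b), which involve $\norm[T]{v_T}$ rather than the seminorm $\norm[1,T]{\cdot}$. I would first try to split $v_T=\overline{v_T}+(v_T-\overline{v_T})$, with $\overline{v_T}$ its mean on $T$. The fluctuation is controlled by Poincar\'e, $h_T\norm{\GRAD v_T}$. For the mean part, the orthogonality of $I-\projT$ would be used to gain powers of $h_T$ from the smoothness of $\chi$ and $\vA$. If the mean cannot be absorbed, I would accept a constant depending on $\norm[W^{k+1,\infty}]{\vA}$ and $\norm[W^{k+1,\infty}]{\chi}$ and a full $H^1$-type local norm, which is what the generic constant $C$ in the statement permits.
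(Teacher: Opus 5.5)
Your route is the paper's. You expand \eqref{eq:def.GT} for the gauged unknowns and drop the face projectors because $\vtau\cdot\vn\in\Poly{k}(F)$. You then exchange $\vAT[\chi]$ for $\vA[\chi]$ and $\projT(e^{i\chi}v_T)$ for $e^{i\chi}v_T$; your (b) and (a) are the paper's $\term[2]$ and $\term[3]$ in the opposite order. Next you cancel the phase with \cref{lem:continuous_gauge_covariance}, split $e^{-i\chi}\vtau=\projT(e^{-i\chi}\vtau)+\vE$, and treat $\term[\vE]$ by integration by parts, orthogonality against $\GRAD v_T$, and the trace approximation estimate.

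Your orthogonality reduction in (a) is more careful than the paper. The paper asserts $\bigO(h_T^{k+1})\|v_T\|_T\|\vtau\|_T$ for $\term[3]$ by ``standard approximation''. For $k\ge2$ approximation alone cannot give this, since for general $v_T,\vtau$ both projection errors are only $\bigO(h_T)$ relative to $\|v_T\|_T$ and $\|\vtau\|_T$. The factor $h_T^{k+1}$ comes from moving $I-\projT$ onto $\vA[\chi]\cdot\vtau$ and using $\|(I-\projT)(\vA[\chi]\cdot\vtau)\|_T\lesssim h_T^{k+1}\|e^{-i\chi}\vtau\|_{H^{k+1}(T)}$. Two minor points, neither affecting the bounds: the difference $(\vAT-\vA)\cdot e^{-i\chi}\vtau$ in (b) never arises, since the target uses the continuous $\vA$; and the integration by parts in (c) gives $+i\sumF(v_T-v_F,\vE\cdot\vn)_F$, as in the paper's $\term[\vE]$.

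The obstacle you flag is genuine, and your first remedy cannot work, because the statement with the seminorm $\|\uvTF\|_{1,T}$ fails on constants. Take an interior cell $T$, $\vA=\vzero$, and $v_T\equiv c$, $v_F\equiv c$ on every face of $T$. Then $\|\uvTF\|_{1,T}=0$ and $\GT[\vzero]\uvTF=0$. Take also $\vtau\equiv\mathbf e$, a constant real unit vector. The face projectors drop, and the face term equals $c\int_T e^{i\chi}\partial_{\mathbf e}\chi\,dx$ by the divergence theorem. The cell term is $-c\int_T\projT(e^{i\chi})\,\projT(\partial_{\mathbf e}\chi)\,dx$. Hence
\[
\Rgauge(\uvTF,\mathbf e)=c\int_T(I-\projT)\bigl(e^{i\chi}\bigr)\,(I-\projT)\bigl(\partial_{\mathbf e}\chi\bigr)\,dx .
\]
For $k=0$, $\chi=x_1^2/2$ and $\mathbf e=\mathbf e_1$, this equals $ic\,e^{i\chi(\bar x_T)}\,\bar x_{T,1}\int_T(x_1-\bar x_{T,1})^2\,dx$ up to higher-order terms, where $\bar x_T$ is the centroid. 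It is nonzero whenever $\bar x_{T,1}\neq0$ and $h_T$ is small. The mean/fluctuation split gains powers of $h_T$ on the mean but never removes the factor $|c|$.

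Your fallback is therefore forced, but it is a change of statement. It is not something ``the generic constant $C$ permits'': no constant turns a seminorm into a norm. What your argument and the paper's actually prove is
\[
|\Rgauge(\uvTF,\vtau)|\le C\,h_T^{k+1}\bigl(\|v_T\|_T+\|\uvTF\|_{1,T}\bigr)\|e^{-i\chi}\vtau\|_{H^{k+1}(T)},
\]
with $C$ depending on $\|\vA\|_{W^{k+1,\infty}(T)}$ and $\|\chi\|_{W^{k+2,\infty}(T)}$.

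The paper's proof makes the same substitution silently. Its bounds for $\term[2]$, $\term[3]$ and the $\vA v_T$ part of $\term[\vE]$ carry $\|v_T\|_T$, as do your (a), (b) and the potential part of (c). Its last line then replaces $\|v_T\|_T$ by $|v_h|_{1,T}$. Only after summing over $T\in\Th$ can the $\|v_T\|_T$ contributions be absorbed into $\|\uvh\|_{1,h}$, via \cref{lem:discrete.Poincare}.
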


\begin{proof}
Let $\vtau \in \Poly{k}(T)^d$. By definition of the discrete gradient $\GT[\chi]$ with the projected potential $\vAT[\chi]$, we have for the first term:
\begin{align}
  (\GT[\chi] \uvTF[\chi], \vtau)_T &= (\pi_T^k(e^{i\chi} v_T), \cGATchi^*\vtau)_T - i \sumF (\pi_F^k(e^{i\chi} v_F), \vtau \cdot \vn )_F \\
  &= (\pi_T^k(e^{i\chi} v_T), \cGATchi^*\vtau)_T - i \sumF ( e^{i\chi} v_F, \vtau \cdot \vn )_F   \label{proof:gauge_convariance:1}
\end{align}
where we have used the fact that $\vtau \cdot \vn \in \Poly{k}(F)$, so that the face projector $\pi_F^k$ can be exactly dropped by orthogonality.
For the volumetric term, we add and subtract the \textit{continuous} potential $\vA[\chi]$ to reconstruct the continuous adjoint $\cGchi^* \vtau = -i\DIV \vtau - \vA[\chi] \cdot \vtau$:
\[
  \cGATchi^* \vtau = -i\DIV \vtau - \vAT[\chi] \cdot \vtau = \cGchi^* \vtau + (\vA[\chi] - \vAT[\chi]) \cdot \vtau.
\]
Substituting the above equality in \eqref{proof:gauge_convariance:1} yields the decomposition:
\begin{multline}
  (\GT[\chi] \uvTF[\chi], \vtau)_T = (\pi_T^k(e^{i\chi} v_T), \cGchi^* \vtau)_T + (\pi_T^k(e^{i\chi} v_T),(\vA[\chi] - \vAT[\chi]) \cdot \vtau)_T \\ - i \sumF ( e^{i\chi} v_F, \vtau \cdot \vn )_F.
\end{multline}
Adding and removing $e^{i\chi}\vT$ in the first volumetric term in the RHS gives
\begin{align*}
  (\GT[\chi] \uvTF[\chi], \vtau)_T =\; & (e^{i\chi} v_T, \cGchi^*\vtau)_T - i \sumF ( e^{i\chi} v_F, \vtau \cdot \vn )_F \\
  & + (\pi_T^k(e^{i\chi} v_T), (\vA[\chi] - \vAT[\chi]) \cdot \vtau)_T \\
  & + (\pi_T^k(e^{i\chi} v_T) - e^{i\chi}v_T, \cGchi^* \vtau)_T  =:\; \term[1] + \term[2] + \term[3].
\end{align*}

We now show that $\term[1]$ simplifies.
From Lemma \ref{lem:continuous_gauge_covariance}, we have $\cGchi^* \vtau = e^{i\chi}\cG^* (e^{-i\chi} \vtau)$, the global phase cancels perfectly in the complex inner product:
\begin{align*}
  \term[1] &= (e^{i\chi} v_T, e^{i\chi}\cG^* (e^{-i\chi} \vtau))_T - i \sumF (e^{i\chi} v_F, e^{i\chi}(e^{-i\chi}\vtau) \cdot \vn )_F \\
           &= (v_T, \cG^* (e^{-i\chi} \vtau))_T - i \sumF (v_F, (e^{-i\chi}\vtau) \cdot \vn )_F.
\end{align*}

Let $\vE := e^{-i\chi}\vtau - \projT(e^{-i\chi}\vtau)$ be the projection error of the gauged test function. Substituting $e^{-i\chi}\vtau = \projT(e^{-i\chi}\vtau) + \vE$ into $\term[1]$, we separate the polynomial part from the remainder:
\begin{align*}
  \term[1] =\; & (v_T, \cG^* \projT(e^{-i\chi}\vtau))_T - i \sumF (v_F, \projT(e^{-i\chi}\vtau) \cdot \vn )_F \\
  & + (v_T, \cG^* \vE)_T - i \sumF (v_F, \vE \cdot \vn )_F.
\end{align*}

By the exact definition of the discrete gradient $\GT[\vA]$ given in \eqref{eq:def.GT}, the first line is exactly equal to $(\GT[\vA] \uvTF, \projT (e^{-i\chi}\vtau))_T$. Thus, we have:
\[
  \term[1] = (\GT[\vA] \uvTF, \projT (e^{-i\chi}\vtau))_T + \term[\vE],
\]
where $\term[\vE]$ encapsulates the action of the continuous operator on the projection error $\vE$:
\[
  \term[\vE] := (v_T, \cG^* \vE)_T - i \sumF (v_F, \vE \cdot \vn )_F.
\]

Denoting the total remainder $\Rgauge(\uvTF, \vtau) := \term[2] + \term[3] + \term[\vE]$, it remains to bound these terms. 
For $\term[2]$ and $\term[3]$, standard approximation properties of the $L^2$-projector yield the optimal bound $\bigO(h_T^{k+1}) \|v_T\|_T \|\vtau\|_T$.
The term $\term[\vE]$ is treated by applying integration by parts to the continuous adjoint $\cG^* \vE = -i\DIV \vE - \vA \cdot \vE$:
\begin{align*}
  \term[\vE] &= (v_T, -i\DIV \vE)_T - (v_T \vA, \vE)_T - i \sumF (v_F, \vE \cdot \vn )_F \\
           &= (-i\GRAD v_T, \vE)_T - (v_T \vA, \vE)_T + i \sumF \langle v_T - v_F, \vE \cdot \vn \rangle_F.
\end{align*}
Because $v_T \in \Poly{k}(T)$, its gradient $\GRAD v_T$ is a polynomial of degree $k-1$. Since $\vE$ is the error of the $L^2$-orthogonal projection onto $\Poly{k}(T)^d$, the term $(-i\GRAD v_T, \vE)_T$ vanishes identically by orthogonality. We are left with purely $L^2$ terms which can be estimated:
\begin{align*}
  |\term[\vE]| &\le \|\vA\|_{L^\infty(T)} \|v_T\|_T \|\vE\|_T + \sumF \|v_T - v_F\|_F \|\vE\|_F \\
             &\le C \|\vA\|_{L^\infty(T)} \|v_T\|_T \left( h_T^{k+1} |e^{-i\chi}\vtau|_{H^{k+1}(T)} \right) + \sumF |v_h|_{1,T} h_F^{\frac12} \left( h_T^{k+\frac{1}{2}} |e^{-i\chi}\vtau|_{H^{k+1}(T)} \right) \\
             &\le C h_T^{k+1} |v_h|_{1,T} \|e^{-i\chi}\vtau\|_{H^{k+1}(T)},
\end{align*}
which concludes the proof.
\end{proof}

\begin{remark}
By employing the adjoint operator $(\projT)^*$ and defining $\vAT[\chi] := \projT(\vA[\chi])$, the preceding identity can be recast as
\[
\GT[{\vAT[\chi]}] \, \uvTF[\chi] 
= e^{i\chi} \, (\projT)^* \GT[\vA] \, \uvTF + \bigO(h^{k+1}),
\]
which parallels the continuous counterpart $\cG[{\vA[\chi]}] \, \psi_{\chi} = e^{i\chi} \, \cG \psi$.
\end{remark}

\begin{remark} Setting  $\vtau_h := \GT[\vA]\uvh \in \Poly{k}(T)^d$ and evaluating the $(k+1)$-th derivatives of the gauged test function via the Leibniz rule yields:
\[
  |e^{-i\chi} \vtau_h|_{H^{k+1}(T)} \le C_\chi \sum_{j=1}^{k+1} \|\GRAD^{k+1-j} \vtau_h\|_{T}.
\]
Since $\vtau_h$ is a polynomial of degree $k$, its derivatives up to order $k$ are non-zero.
Using the inverse inequality yields $\|\GRAD^m \vtau_h\|_T \le C_{inv} h_T^{-m} \|\vtau_h\|_T$. The dominant term corresponds to $j=1$ (the $k$-th derivative of $\vtau_h$), which introduces a factor $h_T^{-k}$. Consequently, the general algebraic covariance bound degrades to first-order:
\[
  |\Rgauge(\uvh, \vtau_h)| \lesssim h_T^{k+1} \left( h_T^{-k} \|\vtau_h\|_T \right) \|\uvh\|_{1,T} = \bigO(h) \|\uvh\|_{1,T}^2.
\]

However, the practical manifestation of this $\bigO(h)$ limit depends heavily on the physical smoothness of the targeted eigenstate.
For the \textbf{fundamental state} and the first few excited states, the physical wavefunctions oscillate on a macroscopic scale $\lambda_{\ell}$ that is much larger than the mesh size ($\lambda_{\ell} \gg h$). The discrete gradient $\vtau_h$ approximates a highly smooth continuous field. The physical spatial derivatives scale as $||\vtau_h||_{H^{k+1}(T)} \sim (\lambda_{\ell})^{-m} \|\vtau_h\|_T$, which is practically bounded independently of $h$, and the scheme exhibits apparent optimal super-convergence $\bigO(h^{2k+2})$ on the eigenvalues, effectively bypassing the local $\bigO(h)$ aliasing limit.
For \textbf{highly excited states}, the physical wavelength approaches the grid resolution ($\lambda_{\ell} \approx h$). The eigenfunctions oscillate at the mesh scale, meaning the inverse inequality bound is physically saturated: the spatial derivatives genuinely scale as $h_T^{-m}$.
In this regime, the polynomial space is unable to adequately resolve the combined oscillation of the phase $e^{-i\chi}$ and the state. The discrete gauge error strictly degrades to $\bigO(h)$, requiring a fine mesh to recover asymptotic coherence.
This highlights that strong gauge invariance is lost at the algebraic level due to polynomial aliasing, but remains asymptotically optimal for macroscopic low-energy physical observables.
\end{remark}

\begin{corollary}[Consistency with continuous potential]
\label{cor:GT.chi_zero}
Let $\GT$ be the discrete gradient constructed with the projected potential $\vAT=\projT(\vA)$. For any $\uvTF \in \UT$, we have
\[
  (\GT \uvTF, \vtau)_T = (\GT[\vA] \uvTF, \vtau)_T + \bigO(h_T^{k+1}) \|\uvTF\|_{1,T} \|\vtau\|_T, \qquad \forall \vtau\in\Poly{k}(T)^d.
\]
\end{corollary}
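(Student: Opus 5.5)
The plan is to read the corollary as the special case $\chi\equiv 0$ of \cref{thm:gauge_covariance}. It is cleaner, though, to compute the difference directly from the definition \eqref{eq:def.GT}, since then only one remainder term survives.

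First, fix $\vtau\in\Poly{k}(T)^d$ and subtract the two instances of \eqref{eq:def.GT}, one written with the projected potential $\vAT=\projT\vA$ and one with $\vA$. The face terms $-i\sumF(v_F,\vtau\cdot\vn)_F$ do not depend on the potential, so they cancel exactly. The volumetric terms differ only through the adjoints, and
\[
  \cG[\vAT]^*\vtau=-i\DIV\vtau-\vAT\cdot\vtau=\cG^*\vtau+(\vA-\vAT)\cdot\vtau .
\]
This is the identity already used for $\term[2]$ in the proof of \cref{thm:gauge_covariance}. We therefore get the exact relation
\[
  (\GT\uvTF,\vtau)_T-(\GT[\vA]\uvTF,\vtau)_T=\bigl(v_T,(\vA-\projT\vA)\cdot\vtau\bigr)_T .
\]
This is precisely $\term[2]$ of the theorem with $\chi=0$. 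The terms $\term[3]$ and $\term[\vE]$ vanish identically, because $\projT(e^{i0}v_T)=v_T$ and $\projT\vtau=\vtau$.

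Second, bound this single term. I would not try to exploit $L^2$-orthogonality of $\vA-\projT\vA$, because the product $\overline{v_T}\,\vtau$ has degree $2k$ and is not in the range of the projector. Instead, I would use H\"older's inequality in the form
\[
  |(v_T,(\vA-\projT\vA)\cdot\vtau)_T|\le\|\vA-\projT\vA\|_{L^\infty(T)}\|v_T\|_T\|\vtau\|_T .
\]
The $L^\infty$ approximation property of the $L^2$ projector on admissible meshes \cite[Chapter~1]{DiPietro2020} then gives
\[
  \|\vA-\projT\vA\|_{L^\infty(T)}\lesssim h_T^{k+1}|\vA|_{W^{k+1,\infty}(T)} .
\]
This requires $\vA$ smooth, which is covered by the section's standing assumption of a ``smooth vector potential''. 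The constant depends only on $\varrho$, $k$ and $\vA$.

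Third, pass from $\|v_T\|_T$ to $\|\uvTF\|_{1,T}$. I expect this to be the main obstacle. The seminorm \eqref{eq:discrete.norms} vanishes on constant DOFs, so a purely local bound is not available. I would follow the same convention as in the last estimate of the proof of \cref{thm:gauge_covariance}: there $\|v_T\|_T$ was absorbed into $|v_h|_{1,T}$, and I would interpret the $\bigO(\cdot)$ constant in the same way. Concretely, the constant absorbs the $L^2$ control of $v_T$, which holds at the global level through the homogeneous Dirichlet condition built into $\Uh$ and a discrete Poincar\'e inequality on $\Uh$ \cite[Chapter~2]{DiPietro2020}. With this convention, $\|v_T\|_T$ is controlled by $\|\uvTF\|_{1,T}$ inside the $\bigO(h_T^{k+1})$ term, and the stated estimate follows.
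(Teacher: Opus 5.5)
Your first two steps match the paper's proof. The paper specializes \cref{thm:gauge_covariance} to $\chi=0$, observes that only $(v_T,(\vA-\projT\vA)\cdot\vtau)_T$ survives, and bounds it by $Ch_T^{k+1}|\vA|_{W^{k+1,\infty}(T)}\|v_T\|_T\|\vtau\|_T$. Your direct subtraction of the two instances of \eqref{eq:def.GT}, followed by your H\"older/$L^\infty$ estimate, gives exactly this. The paper stops at $\|v_T\|_T$, so the obstacle you flag in your third step is real. Your way around it, however, is not a proof. The statement is local, for an arbitrary $\uvTF\in\UT$ with no boundary condition, and the hidden constant must be independent of $\uvTF$. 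The ratio $\|v_T\|_T/\|\uvTF\|_{1,T}$ is unbounded on $\UT$: take $v_T=1+\epsilon p$ with $p$ non-constant and $v_F=(v_T)_{|F}$; then $\|\uvTF\|_{1,T}=\epsilon\|\GRAD p\|_T\to0$ while $\|v_T\|_T\to|T|^{1/2}$. So no constant can absorb it. The global discrete Poincar\'e inequality of \cref{lem:discrete.Poincare} only gives $\|v_T\|_T\le C_P\|\uvh\|_{1,h}$, a global quantity, which changes the statement. The identical absorption at the end of the proof of \cref{thm:gauge_covariance} is equally unjustified, so it is not a valid precedent.

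The missing idea is that the difference itself vanishes when $v_T$ is constant, so your claim that no purely local bound is available is false. You are right that $\overline{v_T}\,\vtau\notin\Poly{k}(T)^d$ in general, but its constant part does lie in that space. Since $\vA$ and $\projT\vA$ are real-valued, $(\pi_T^0v_T,(\vA-\projT\vA)\cdot\vtau)_T=\pi_T^0v_T\,(\vA-\projT\vA,\vtau)_T=0$. Hence
\[
(\GT\uvTF,\vtau)_T-(\GT[\vA]\uvTF,\vtau)_T=\bigl(v_T-\pi_T^0v_T,(\vA-\projT\vA)\cdot\vtau\bigr)_T .
\]
Now apply \eqref{eq:proj.approx.cell} with $l=0$, $s=1$, $m=0$, $p=2$, which is a local Poincar\'e--Wirtinger inequality. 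It gives $\|v_T-\pi_T^0v_T\|_T\lesssim h_T\|\GRAD v_T\|_T\le h_T\|\uvTF\|_{1,T}$. Combined with your $L^\infty$ bound on $\vA-\projT\vA$, this yields
\[
\bigl|(\GT\uvTF,\vtau)_T-(\GT[\vA]\uvTF,\vtau)_T\bigr|\lesssim h_T^{k+2}|\vA|_{W^{k+1,\infty}(T)}\|\uvTF\|_{1,T}\|\vtau\|_T .
\]
This bound is purely local and uniform in $\uvTF$. It is also one order better than claimed, and it closes the step the paper leaves implicit.
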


\begin{proof} 
Applying Theorem \ref{thm:gauge_covariance} with the trivial gauge phase $\chi = 0$ gives $\uvTF[0] = \uvTF$ and $\vAT[0] = \vAT$. 
Furthermore, since $e^{-i0}\vtau = \vtau \in \Poly{k}(T)^d$, its $L^2$-projection is exact, meaning the projection error vanishes identically ($\vE = 0$). The total remainder $\Rgauge(\uvTF, \vtau)$ reduces solely to the potential approximation term $\term[2] = (v_T, (\vA - \vAT) \cdot \vtau)_T$. By standard polynomial approximation, this is bounded by $C h_T^{k+1} |\vA|_{W^{k+1,\infty}(T)} \|v_T\|_T \|\vtau\|_T$ without requiring any inverse inequality on the test function, yielding the optimal $L^2$ bound.
\end{proof}

\subsection{Discrete G\aa rding Inequality}

Before addressing the discrete G\aa rding Inequality, we need the following Lemma.

\begin{lemma} There exists a constant $C>0$, independent of $h$, such that the magnetic-free discrete covariante gradient $\GT[\vzero]$ satisfies
  \[
    || \GT[\vzero] \uuTF ||_T^2 +  s_T(\uuTF, \uuTF) \ge C ||\uuTF ||_{1,T}^2,
  \]
  for any $\uuTF \in \UT$.
  \label{lem:GT.zero.coercice}
\end{lemma}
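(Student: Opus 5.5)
The plan is to reduce the statement to the classical HHO stability. First, I will identify $\GT[\vzero]$ with a multiple of the standard HHO full gradient. Then I will compare that gradient with $\GRAD\pT$, and finally invoke the stability property of $s_T$ assumed through \cite[Assumption 2.4]{DiPietro2020}.

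\emph{Step 1 (identification).} Setting $\vA=\vzero$ in \eqref{eq:def.GT.adjoint} gives $\cG u_T = -i\GRAD u_T$. Hence, for all $\vtau\in\Poly{k}(T)^d$,
\[
  (\GT[\vzero]\uuTF,\vtau)_T = -i\Bigl[(\GRAD u_T,\vtau)_T + \sumF (u_F-u_T,\vtau\cdot\vn)_F\Bigr],
\]
where the sign convention for the face term is read off from \eqref{eq:def.GT}. The bracket defines the usual HHO gradient $\boldsymbol{G}_T^k\uuTF\in\Poly{k}(T)^d$, so $\GT[\vzero]\uuTF=-i\,\boldsymbol{G}_T^k\uuTF$. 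Since $|{-i}|=1$, this gives $\norm[T]{\GT[\vzero]\uuTF}=\norm[T]{\boldsymbol{G}_T^k\uuTF}$.

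\emph{Step 2 (comparison with $\pT$).} I restrict the test functions to $\vtau=\GRAD w$ with $w\in\Poly{k+1}(T;\Complex)$, which lie in $\Poly{k}(T)^d$. Comparing with \eqref{eq:pT.equivalent} shows that $(\boldsymbol{G}_T^k\uuTF-\GRAD\pT\uuTF,\GRAD w)_T=0$ for every such $w$. Thus $\GRAD\pT\uuTF$ is the $L^2(T)$-orthogonal projection of $\boldsymbol{G}_T^k\uuTF$ onto $\GRAD\Poly{k+1}(T;\Complex)$. Pythagoras then yields
\[
  \norm[T]{\GT[\vzero]\uuTF}\ge\norm[T]{\GRAD\pT\uuTF}.
\]

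\emph{Step 3 (stability).} Combining Step 2 with the stability requirement on $s_T$ from \cite[Assumption 2.4]{DiPietro2020} closes the argument. That requirement states that $\norm[T]{\GRAD\pT\uuTF}^2+s_T(\uuTF,\uuTF)\gtrsim\norm[1,T]{\uuTF}^2$, with a constant depending only on $\varrho$, $d$ and $k$. It is satisfied by the stabilization chosen in \cite[Eq. (2.22)]{DiPietro2020}, as proved there in Lemma 2.18.

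\emph{Main obstacle and fallback.} The key difficulty is controlling the face jumps $h_F^{-1}\norm[F]{u_F-u_T}^2$ in $\norm[1,T]{\cdot}$. Proposition \ref{prop:pT.coercive} only delivers $\norm[T]{\GRAD u_T}$, and its proof reuses $s_T$ as if it were the raw jump sum. If I want a self-contained argument rather than citing \cite{DiPietro2020}, I would proceed in three moves:
\begin{itemize}
  \item Write $u_F-u_T=(\delT[TF]-\delT)\uuTF-\projF(\pT\uuTF-\projT\pT\uuTF)$, using that $\projF u_T=u_T$ on $F$ and the normalization of $\pT$.
  \item Bound the second piece by a discrete trace inequality and a Poincar\'e-type estimate on $\pT\uuTF-\projT\pT\uuTF$, which gives control by $h_F^{1/2}\norm[T]{\GRAD\pT\uuTF}$.
  \item Sum over the at most $\Np$ faces, then add the bound for $\norm[T]{\GRAD u_T}$ from Proposition \ref{prop:pT.coercive} together with Step 2.
\end{itemize}
Everything involved depends only on mesh regularity, so the resulting constant $C$ is independent of $h$.
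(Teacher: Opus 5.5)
Your proof is correct. Its first two steps coincide with the paper's: test with $\vtau=\GRAD w$, $w\in\Poly{k+1}(T)$, identify $-i\GRAD\pT\uuTF$ as the $L^2(T)$-projection of $\GT[\vzero]\uuTF$ onto $\GRAD\Poly{k+1}(T)$, and deduce $\norm[T]{\GT[\vzero]\uuTF}\ge\norm[T]{\GRAD\pT\uuTF}$. Reading the face sign off \eqref{eq:def.GT}, as you do, is the right call: integrating \eqref{eq:def.GT} by parts gives $u_F-u_T$, which is what \eqref{eq:pT.equivalent} requires, whereas \eqref{eq:def.GT.adjoint} and the paper's proof write $u_T-u_F$.

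The two routes differ in how the face jumps are recovered. The paper adds $s_T$, trades $\norm[T]{\GRAD\pT\uuTF}^2$ for $C\norm[T]{\GRAD u_T}^2$ via \cref{prop:pT.coercive}, and then asserts $C\norm[T]{\GRAD u_T}^2+s_T(\uuTF,\uuTF)\ge\min(1,C)\norm[1,T]{\uuTF}^2$. That last step needs $s_T(\uuTF,\uuTF)\ge\sum_{F\in\FT}h_F^{-1}\norm[F]{u_F-u_T}^2$. This fails for the $\delta$-based stabilization of \cite[Eq.~(2.22)]{DiPietro2020}: for $k=0$ and $\uuTF=\IT p$ with $p$ affine and non-constant, both $s_T(\uuTF,\uuTF)$ and $\GRAD u_T$ vanish, while the jumps $\projF(p-\projT p)$ do not all vanish.

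You instead apply the stability half of \cite[Assumption~2.4]{DiPietro2020} directly to $\norm[T]{\GRAD\pT\uuTF}^2+s_T(\uuTF,\uuTF)$. This keeps the $\GRAD\pT$ term that the paper discards, and that term is exactly what controls the jumps in the example above, where $\GRAD\pT\uuTF=\GRAD p$. Since the paper itself postulates this assumption on $s_T$, your Step 3 is rigorous for every admissible stabilization. The paper's chain is only valid when $s_T$ dominates the raw jump seminorm. The price of your route is that the real work is outsourced to the cited stability result.

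Your fallback locates that work correctly, with two small corrections:
\begin{itemize}
\item The identity in its first bullet holds for $u_T-u_F$ rather than $u_F-u_T$. This is harmless for norms, and the normalization of $\pT$ plays no role.
\item In the last bullet you should use the \emph{argument} of \cref{prop:pT.coercive} rather than its stated conclusion. That argument legitimately yields $\norm[T]{\GRAD u_T}\le\norm[T]{\GRAD\pT\uuTF}+C_{tr}\Np^{1/2}\bigl(\sum_{F\in\FT}h_F^{-1}\norm[F]{u_F-u_T}^2\bigr)^{1/2}$, which you then combine with your jump bound.
\end{itemize}
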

\begin{proof} taking $\vtau = \GRAD w$ with $w\in\Poly{k+1}(T)$ and $\vA=\vzero$ in the definition of $\GT[\vA]$ \eqref{eq:def.GT.adjoint} yields
    \begin{align*}
      (\GT[\vzero] \uuTF, \GRAD w)_T &= -i\Big( (\GRAD u, \GRAD w)_T + \sumF (u_T - u_F, \GRAD w\cdot \vn)_F\Big) \\
                                     &= (-i\GRAD \pT \uuTF, \GRAD w)_T.
    \end{align*}
    This indicates that $-i\GRAD \pT \uuTF$ is the $L^2$ projection of $\GT[\vzero]\uuTF$ on the subspace $\GRAD \Poly{k+1}(T) \subset \Poly{k}(T)^d$.
    Hence,
    \[
      ||\GT[\vzero] \uuTF||^2_T \ge ||-i\GRAD \pT \uuTF||^2 \ge || \GRAD \pT \uuTF||.
    \]
    Adding $s_T(\uuTF,\uuTF)$ on both sides and using ~\cref{prop:pT.coercive} yields
      \begin{align*}
        ||\GT[\vzero] \uuTF||^2_T + s_T(\uuTF, \uuTF) &\ge C||\GRAD u_T||_T^2 + s_T(\uuTF, \uuTF), \\
                                                      &\ge \min(1,C)||\uuTF||_{1,T}^2.
      \end{align*}
\end{proof}

\begin{theorem}[Discrete Gauge-Invariant G\aa rding Inequality]
\label{thm:garding}
There exist constants $\alpha > 0$ and $\gamma > 0$, independent of $h$, such that for all $\uvh \in \Uh$:
\[
  a_h(\uvh, \uvh; \vA) \ge \alpha \|\uvh\|_{1,h}^2 - \gamma \left( \|\vA\|_{L^\infty(\DOM)}^2 + \|V\|_{L^\infty(\DOM)} \right) \|\uvh\|_{L^2(\DOM)}^2,
\]
where $\|\uvh\|_{L^2(\DOM)}^2 := \sumT \|v_T\|_T^2$.
\end{theorem}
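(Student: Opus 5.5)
The plan is to mimic the continuous proof of \cref{lem:garding} at the discrete level: split the covariant gradient into its magnetic-free part and a zeroth-order magnetic perturbation, then use \cref{lem:GT.zero.coercice} in place of the continuous $H^1$ seminorm. The first step is to compare the two discrete gradients. Subtracting \eqref{eq:def.GT} written for $\vA$ and for $\vzero$, the boundary terms and the $-i\DIV\vtau$ terms cancel, so for every $\vtau\in\Poly{k}(T)^d$
\[
  (\GT[\vA]\uvTF - \GT[\vzero]\uvTF, \vtau)_T = -(v_T, \vA\cdot\vtau)_T = -(\vA v_T,\vtau)_T .
\]
Hence $\GT[\vA]\uvTF = \GT[\vzero]\uvTF - \projT(\vA v_T)$, with $\projT$ acting componentwise. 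The same identity holds verbatim for the projected potential $\vAT$, which is what enters $a_h$. In either case $L^2$-stability of the projector gives $\|\projT(\vA v_T)\|_T\le \|\vA\|_{L^\infty(T)}\|v_T\|_T$. For $\vAT$ one also uses $\|\projT\vA\|_{L^\infty(T)}\lesssim\|\vA\|_{L^\infty(T)}$, which holds on admissible meshes by a uniform $L^\infty$-stability of the $L^2$-projector, at the price of a constant depending only on $\varrho$ and $k$.

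The second step is local. Applying the elementary inequality $|a-b|^2\ge \tfrac12|a|^2-|b|^2$ to the exact identity above, and integrating over $T$, gives
\[
  \|\GT[\vA]\uvTF\|_T^2 \ge \tfrac12\|\GT[\vzero]\uvTF\|_T^2 - C\|\vA\|_{L^\infty(T)}^2\|v_T\|_T^2 .
\]
Adding $s_T(\uvTF,\uvTF)\ge\tfrac12 s_T(\uvTF,\uvTF)$ and invoking \cref{lem:GT.zero.coercice} then yields
\[
  a_T(\uvTF,\uvTF)\ge \tfrac{C}{2}\|\uvTF\|_{1,T}^2 - C\|\vA\|_{L^\infty(T)}^2\|v_T\|_T^2 .
\]
The stabilization is unaffected because it does not involve $\vA$.

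The third step handles the scalar potential. If $a_h$ is understood to include $\sumT (Vv_T,v_T)_T$, mirroring \eqref{eq:sesquilinear_form}, this term is bounded below by $-\|V\|_{L^\infty(\DOM)}\|v_T\|_T^2$. Summing over $T\in\Th$ then gives the claim, with $\alpha=C/2$ and $\gamma=\max(C,1)$. Gauge invariance of the constants follows because nothing depends on $\chi$ beyond $\|\vA\|_{L^\infty}$.

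The main obstacle is the first step, namely justifying that the difference of the discrete gradients is exactly a projected multiplication operator and controlling it in $L^2$ uniformly in $h$. If one insists on the projected potential $\vAT$, the $L^\infty$-stability of $\projT$ is needed. If that is unavailable, the fallback is to write $\vAT=\vA+(\vAT-\vA)$. The part with $\vA$ is bounded as above. The correction $(v_T,(\vA-\vAT)\cdot\vtau)_T$ is absorbed using \cref{cor:GT.chi_zero}: it is $\bigO(h^{k+1})\|\uvTF\|_{1,T}\|\vtau\|_T$, and by Young's inequality it can be absorbed into $\alpha\|\uvh\|_{1,h}^2$ for $h$ small enough.
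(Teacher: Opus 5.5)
Your proposal is correct and follows the paper's proof essentially step for step: the exact identity $\GT[\vA]\uvTF=\GT[\vzero]\uvTF-\projT(\vA v_T)$, the inequality $\|a-b\|^2\ge\frac12\|a\|^2-\|b\|^2$ with the $L^2$-contraction of $\projT$, \cref{lem:GT.zero.coercice} after adding the stabilization, and the crude lower bound on the $V$-term, giving $\alpha=C/2$. The one difference is that you note $a_h$ is built with $\vAT=\projT\vA$, so returning to $\|\vA\|_{L^\infty(\DOM)}$ uses the $L^\infty$-stability of $\projT$; this changes $\gamma$ only by a constant depending on $\varrho$ and $k$, whereas the paper states the bound in terms of whatever potential enters the gradient and takes $\gamma=1$. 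Since that stability holds on admissible meshes, your small-$h$ fallback is not needed.
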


\begin{proof} By the definition of the full covariant gradient $\GT[\vA]$:
\begin{align*}
  (\GT[\vA] \uvTF, \vtau)_T &= ((-i\GRAD - \vA)v_T, \vtau)_T - i \sumF \langle v_T - v_F, \vtau \cdot \vn \rangle_F \\
  &= (\GT[0] \uvTF, \vtau)_T - (\vA v_T, \vtau)_T.
\end{align*}

Since this equality holds for any test polynomial $\vtau \in \Poly{k}(T)^d$, it yields the exact algebraic identity:
\[
  \GT[\vA] \uvTF = \GT[\vzero] \uvTF - \projT (\vA v_T).
\]
Taking the squared $L^2$-norm of $\GT[\vA] \uvTF$ and using the elementary inequality $\|a - b\|^2 \ge \frac{1}{2}\|a\|^2 - \|b\|^2$, we get:
\[
  \|\GT[\vA] \uvTF\|_T^2 \ge \frac{1}{2}\|\GT[\vzero] \uvTF\|_T^2 - \|\projT(\vA v_T)\|_T^2.
\]
Since the $L^2$-orthogonal projector $\projT$ is a contraction (i.e., $\|\projT \vtau \|_T \le \|\vtau \|_T$), we can bound the magnetic perturbation as:
\[
  \|\projT(\vA v_T)\|_T^2 \le \|\vA v_T\|_T^2 \le \|\vA\|_{L^\infty(T)}^2 \|v_T\|_T^2.
\]
Adding the stabilization term $s_T(\uvTF, \uvTF)$ to both sides (and noting that $\frac{1}{2} s_T \le s_T$), we obtain:
\[
  \|\GT[\vA] \uvTF\|_T^2 + s_T(\uvTF, \uvTF) \ge \frac{1}{2} \left( \|\GT[\vzero] \uvTF\|_T^2 + s_T(\uvTF, \uvTF) \right) - \|\vA\|_{L^\infty(T)}^2 \|v_T\|_T^2.
\]
Using the upper bound in Lemma \ref{lem:GT.zero.coercice} yields:
\[
  \|\GT[\vA] \uvTF\|_T^2 + s_T(\uvTF, \uvTF) \ge \frac{C}{2} \|\uvTF\|_{1,T}^2 - \|\vA\|_{L^\infty(T)}^2 \|v_T\|_T^2.
\]
The local bilinear form is given by $a_T(\uvTF, \uvTF) = \|\GT[\vA] \uvTF\|_T^2 + s_T(\uvTF, \uvTF) + (V \pi_T^k v_T, v_T)_T$.
For the scalar potential term, the contraction property of $\pi_T^k$ yields:
\[
  (V \pi_T^k v_T, v_T)_T \ge -\|V\|_{L^\infty(T)} \|\pi_T^k v_T\|_T \|v_T\|_T \ge -\|V\|_{L^\infty(T)} \|v_T\|_T^2.
\]
Summing over all elements $T \in \Th$, we finally obtain:
{\small
\begin{align*}
a_h(\uvTF, \uvTF) &= \sumT a_T(\uvTF, \uvTF)  \ge \sumT \left( \frac{C}{2} \|\uvTF\|_{1,T}^2 - \left(\|\vA\|_{L^\infty(T)}^2 + \|V\|_{L^\infty(T)}\right) \|v_T\|_T^2 \right) \\
  &\ge \frac{C}{2} \|\uvTF\|_{1,h}^2 - \left(\|\vA\|_{L^\infty(\DOM)}^2 + \|V\|_{L^\infty(\DOM)}\right) \|\uvh\|_{L^2(\DOM)}^2.
\end{align*}
}
Setting $\alpha = \frac{C}{2}$ and $\gamma = 1$ concludes the proof.
\end{proof}

\subsection{A Priori Error Estimate for the Stationary Schr\"{o}edinger problem}
\label{sec:a_priori_error}

In this section, we focus on the stationary Schr\"{o}dinger problem:
find $u\in H^1_0(\DOM; \Complex)$, such that
\begin{equation}
  \label{eq:continuous.stationnary}
  \cG^*\cG u  + Vu = f, \qquad \text{ in } \DOM.
\end{equation}
In particular, we establish a priori error estimates for the discrete equivalent of the above problem : find $\uuh \in \Uh$ such that 
\begin{align}
  \label{eq:discrete.stationnary}
  a_h(\uuh, \uvh; \vA[h]) &= \ell(\uvh), \qquad \forall \uvh \in \Uh,
\end{align}
where $a_h(\cdot, \cdot; \vA[h])$ is defined using $a_T$ defined in \eqref{eq:aT.defn} with the potential $V$, i.e.
\[
  a_h(\uuh, \uvh; \vA[h]) := \sumT \left\{a_T(\uuTF, \uvTF) + (Vu_T, v_T)_T \right\}.
\]
We also assume that the mesh is sufficiently fine so that this problem is well-posed.

\begin{theorem}[Optimal a priori error estimate]
\label{thm:a_priori}
Let $u \in H_0^1(\Omega;\Complex)$ be the exact solution of the continuous problem \eqref{eq:continuous.stationnary} and $\uuh \in U_h^k$ be the discrete HHO solution from \eqref{eq:discrete.stationnary}. Assuming sufficient regularity $u \in H^{k+2}(\Omega;\Complex)$, $\vA \in W^{k+1,\infty}(\Omega;\Real^d)$, and $V \in L^\infty(\Omega;\Real)$, there exists a constant $C > 0$, independent of $h$, $\vA$, and $V$, such that for $h$ sufficiently small:
\begin{equation}
  \|\uuh - \Ih u \|_{1,h} \le C h^{k+1} \left( \|u\|_{H^{k+2}(\Omega)} + \big( \|\vA\|_{W^{k+1,\infty}(\Omega)} + \|V\|_{L^\infty(\Omega)} \big) \|u\|_{H^{k+1}(\Omega)} \right).
\end{equation}
Moreover, an $L^2$-error bound for the cell unknowns can be given as
\begin{equation}
  \| \uuh  - \projh u\|_{L^2(\Omega)} \le C h^{k+2} \left( \|u\|_{H^{k+2}(\Omega)} + \big( \|\vA\|_{W^{k+1,\infty}(\Omega)} + \|V\|_{L^\infty(\Omega)} \big) \|u\|_{H^{k+1}(\Omega)} \right).
\end{equation}
\end{theorem}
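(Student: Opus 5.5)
The plan is the standard HHO argument: a Strang-type error equation, a consistency bound of order $h^{k+1}$, a discrete stability (inf-sup) estimate derived from the Gårding inequality, and an Aubin--Nitsche duality argument for the $L^2$ bound.

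\textbf{Error equation and consistency.} Set $\underline{e}_h := \uuh - \Ih u \in \Uh$. For every $\uvh\in\Uh$ we have
\[
a_h(\underline{e}_h,\uvh;\vA[h]) = \ell(\uvh) - a_h(\Ih u,\uvh;\vA[h]) =: \mathcal{E}_h(\uvh),
\]
where $\ell(\uvh) = \sumT (f,v_T)_T$ and $f = \cG^*\cG u + Vu$. The first task is to bound this consistency error:
\begin{equation*}
  |\mathcal{E}_h(\uvh)| \lesssim h^{k+1}\big(\|u\|_{H^{k+2}} + (\|\vA\|_{W^{k+1,\infty}}+\|V\|_{L^\infty})\|u\|_{H^{k+1}}\big)\|\uvh\|_{1,h}.
\end{equation*}
To prove it, I will compare $\GT \Ih u$ with $\projT(\cG u)$. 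By \eqref{eq:def.GT} and the commuting property of the $L^2$ projectors, $(\GT[\vA]\Ih u,\vtau)_T = (u,\cG^*\vtau)_T - i\sumF(u,\vtau\cdot\vn)_F + (\projT u - u, \vA\cdot\vtau)_T$. Integrating by parts turns the first two terms into $(\cG u,\vtau)_T$. The last term is orthogonal to $\Poly{k}$ only up to $\vA$, so it is $\bigO(h^{k+1})\|\vA\|_{L^\infty}|u|_{H^{k+1}}\|\vtau\|_T$. Corollary~\ref{cor:GT.chi_zero} then lets me pass from $\vA$ to $\vAT$ at a cost of $h^{k+1}|\vA|_{W^{k+1,\infty}}\|u\|_T$. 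Hence $\GT\Ih u = \projT(\cG u) + \bigO(h^{k+1})$.

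Next I write $\ell(\uvh) = \sumT(\cG^*\cG u + Vu, v_T)_T$ and integrate by parts cellwise. Single-valuedness of $\cG u\cdot\vn$ and $v_F=0$ on $\Fhb$ allow me to insert face unknowns, giving
\[
\sumT\Big[(\cG u,\cG v_T)_T - i\sumF(v_T-v_F,\cG u\cdot\vn)_F + (Vu,v_T)_T\Big].
\]
Subtracting \eqref{eq:def.GT.adjoint} tested with $\vtau = \projT\cG u$ leaves three groups of remainders:
\begin{itemize}
\item face terms $(v_T-v_F,(\cG u-\projT\cG u)\cdot\vn)_F$, bounded by $h^{k+1}|\cG u|_{H^{k+1}}\|\uvh\|_{1,h}$ through trace approximation;
\item the terms coming from the consistency of $\GT\Ih u$ established above;
\item the term $(V(u-\projT u),v_T)$, bounded by $h^{k+1}\|V\|_{L^\infty}|u|_{H^{k+1}}\|v_T\|$.
\end{itemize}
Finally, the stabilization contribution $s_T(\Ih u,\uvh)$ is bounded by $h^{k+1}|u|_{H^{k+2}}$ using the polynomial consistency of $s_T$ from \cite{DiPietro2020}.

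\textbf{Stability.} Here I expect the main obstacle. Theorem~\ref{thm:garding} only gives $\Re\, a_h(\underline{e}_h,\underline{e}_h) \ge \alpha\|\underline{e}_h\|_{1,h}^2 - \gamma'\|e_h\|^2$, so coercivity alone does not control $\underline{e}_h$. I would use a Schatz-type argument. Let $z\in H^1_0$ solve the adjoint problem $\cG^*\cG z + Vz = e_h$, where $e_h$ denotes the piecewise cell component. This requires the assumption that the continuous problem is well-posed, i.e.\ $0$ is not an eigenvalue, together with elliptic regularity $\|z\|_{H^2}\lesssim\|e_h\|$, which I will assume for convex $\DOM$. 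Then
\[
\|e_h\|^2 = a_h(\underline{e}_h,\Ih z) + \text{(adjoint consistency)} = \mathcal{E}_h(\Ih z) + \bigO(h)\|\underline{e}_h\|_{1,h}\|e_h\|,
\]
where the adjoint consistency error is estimated exactly as above with $k=0$-type regularity. This yields
\[
\|e_h\| \lesssim \big(h\|\underline{e}_h\|_{1,h} + \text{data}\cdot h^{k+2}\big).
\]
Inserting this into the Gårding inequality with the test function $\uvh=\underline{e}_h$ and absorbing $\gamma' h^2\|\underline{e}_h\|_{1,h}^2$ for $h$ small gives the energy estimate.

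\textbf{$L^2$ estimate.} Repeating the duality argument with the energy bound now available gives the $h^{k+2}$ rate. The extra power of $h$ comes from the Galerkin orthogonality pairing $\mathcal{E}_h(\Ih z)$: each consistency remainder is multiplied by the approximation error of $z$, which is of order $h\|z\|_{H^2}$, rather than by $\|\Ih z\|_{1,h}$. This step needs care with the $\vA$-projection terms, since $(\projT u-u,\vA\cdot\vtau)$ tested against $\Ih z$ must itself be split as $\vA\cdot\vtau - \projT(\vA\cdot\vtau)$ to gain the additional power of $h$.
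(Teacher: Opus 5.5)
Your energy-norm argument is correct, but it reaches stability by a different route from the paper. Your $L^2$ step has a genuine gap.

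\textbf{Energy norm.} The consistency part uses the same decomposition as the paper: cellwise integration by parts with face unknowns inserted, comparison with \eqref{eq:def.GT.adjoint} tested with $\projT(\cG u)$, and Corollary~\ref{cor:GT.chi_zero} to pass from $\vA$ to $\vAT$. It is actually more accurate than the paper's. The paper asserts $\GT[\vA]\IT u=\projT(\cG u)$, justified by the incorrect claim $\pi_F^k u=(\projT u)_{|F}$, and this fails as soon as $\vA\neq 0$. You correctly retain the commutator $\projT\big(\vA(\projT u-u)\big)$, which is $\mathcal{O}(h^{k+1})\|\vA\|_{L^\infty}|u|_{H^{k+1}}$; note that its sign is opposite to the one you wrote. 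You also omit the volumetric remainder $(\cG u-\projT(\cG u),\cG v_T)_T=-(\cG u-\projT(\cG u),\vA v_T)_T$, which is bounded the same way.

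The real difference is stability. The paper converts the G\aa rding inequality into an $h$-uniform discrete inf-sup condition by a compactness argument and applies it to $\ueh$. You use a Schatz duality argument instead. Your route needs elliptic regularity of the adjoint problem: $H^2$ on convex $\DOM$, though $H^{1+s}$ with $s>0$ already suffices for the absorption. The paper's energy bound does not need this. In exchange, your argument is constructive (it gives an explicit threshold on $h$) and uses only low-regularity consistency estimates, not discrete Rellich compactness and limit-consistency of $\GT$. Both routes need injectivity of the continuous operator.

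For the absorption you only need $|\mathcal{E}_h(\Ih z)|\le\|\mathcal{E}_h\|_{*}\|\Ih z\|_{1,h}\lesssim h^{k+1}(\text{data})\|e_h\|$. Invoking $h^{k+2}$ at that stage is unnecessary, and it imports the problem described next.

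\textbf{$L^2$ estimate.} It is not true that each remainder in $\mathcal{E}_h(\Ih z)$ is multiplied by an approximation error of $z$: the volumetric remainders are tested against $v_T=\projT z$ itself.
\begin{itemize}
\item For the $\vA$-dependent terms you can recover a power of $h$ by subtracting a projection, as you indicate, because $\vA\in W^{k+1,\infty}$. The $\vA-\vAT$ terms can be handled likewise, since $\vA-\vAT$ is orthogonal to constants.
\item The potential remainder $\sumT(V(u-\projT u),\projT z)_T$ admits no such gain when $V$ is merely in $L^\infty$. Nothing polynomial is paired with $u-\projT u$, and the term is in general only $\mathcal{O}(h^{k+1})\|V\|_{L^\infty}|u|_{H^{k+1}}\|z\|$. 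You need something like $V\in W^{1,\infty}$, which gives $(u-\projT u,V\projT z-\projT(V\projT z))_T\lesssim h^{k+2}\|V\|_{W^{1,\infty}}|u|_{H^{k+1}(T)}\|z\|_{H^1(T)}$.
\item For $k=0$ the face remainders $((\cG u-\projT(\cG u))\cdot\vn,\projT z-\pi_F^0 z)_F$ gain nothing either. Since $\projT z$ is constant, $\|z-\projT z\|_F$ is only $\mathcal{O}(h^{1/2})|z|_{H^1(T)}$. This is why HHO $L^2$ estimates for $k=0$ require extra regularity of the right-hand side.
\end{itemize}
The paper's proof only invokes the classical Aubin--Nitsche argument and does not address these points either. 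Under the stated hypotheses ($V\in L^\infty$, $k\ge 0$), neither argument establishes the $h^{k+2}$ bound.
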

\begin{proof}[Proof of Theorem \ref{thm:a_priori}]

First, we observe that there exists a constant $\beta > 0$, independent of $h$, such that for all $\uwh \in \Uh$:
$$\beta \|\uwh\|_{1,h} \le \sup_{\uvh \in U_h^k \setminus \{0\}} \frac{|a_h(\uwh, \uvh)|}{\|\uvh\|_{1,h}}.$$
Indeed, the above discrete inf-sup condition follows from the discrete G\aa rding inequality via a standard compactness argument in HHO spaces (see \cite[Theorem 6.41]{DiPietro2020} with $p=2$).
Let $\ueh := \uuh - \Ih u \in \Uh$ be the discrete error.
Applying the inf-sup condition to $\ueh$ yields:
$$ \beta \|\ueh\|_{1,h} \le \sup_{\uvh \in U_h^k \setminus \{0\}} \frac{|a_h(\uuh, \uvh) - a_h(\Ih u, \uvh)|}{\|\uvh\|_{1,h}}.$$
Since $\uuh$ is the discrete solution, it satisfies $a_h(\uuh, \uvh) = \ell(\uvh)$.
We define the consistency error functional as $\mathcal{E}_h(\uvh) := \ell(\uvh) - a_h(\Ih u, \uvh)$, for any $\uvh \in \Uh$.
Since $u$ solves the exact continuous equation \eqref{eq:continuous.stationnary}, we have the following splitting

\[
  \Eh(\uvh) = (f, \vh) - a_h(\Ih u, \uvh) =: \term[grad] + \term[stab] + \term[pot],
\]
where $\term[pot] :=- \sumT (V\projT u, v_T)_T$, $\term[stab] := -  \sumT s_T(\IT u, \uvTF)$ and 
\[
  \term[grad] := \sumT (\cG^*\cG u + Vu, v_T)_T - (\GT \IT u, \GT \uvTF)_T.
\]

\textbf{Step 1.} We first bound the term $\term[pot]$, by Cauchy-Schwarz, the definition of $h$, the local approximation property of $\projT$ in \eqref{eq:proj.approx.cell}, the global discrete Poincaré inequality in \cref{lem:discrete.Poincare} for $\uvTF\in\UT$ and the boundedness of the potential $V$ yields
\begin{align*}
  |\term[pot]| &= \left|\sumT (Vu - V\projT u, \vT)_T\right| \le \sumT ||V||_{L^\infty(\DOM)}\|u-\projT u\|_T \|\vT\|_T \\
               &\le ||V||_{L^\infty(\DOM)}\sum_T C h_T^{k+1}|u|_{H^{k+2}(T)} \| \vT \|_T \\
               &= ||V||_{L^\infty(\DOM)} C h^{k+1}|u|_{H^{k+2}(\DOM)} \| \vh \|_{L^2(\DOM)}  \\
               &\lesssim ||V||_{L^\infty(\DOM)} h^{k+1} |u|_{H^{k+2}(\DOM)} \| \uvh \|_{1,h}.
\end{align*}

\textbf{Step 2.} Then, for $\term[stab]$, we use Cauchy-Schwarz and \cref{prop:sT.bound} 
\begin{align*}
  |\term[stab]| &= \left| \sumT s_T(\IT u, \uvTF)\right| = \left| \sumT \sum_{F\in\FT} h_F^{-1} (\delT[TF](u) - \delT(u), v_F - v_T)_F\right|\\
                &\le   \sumT \sum_{F\in\FT} h_F^{-\tfrac12} \|\delT[TF](u) - \delT(u)\|_F h_F^{-\tfrac12}\|v_F - v_T\|_F \\
                &\le \sumT \left( \sumF h_F^{-1} \|\delT[TF](u) - \delT(u)\|_F^2 \right)^{\tfrac12} \left( \sum_{F\in\FT} h_F^{-1}\|v_F - v_T\|_F^2  \right)^{\tfrac12}\\
                &\le \sumT s_T(\IT u, \IT u)^{\tfrac12}s_T(\uvTF, \uvTF)^{\tfrac12} \\
                & \le C h^{k+1} \sumT \|u\|_{H^{k+2}(T)} \|\uvTF\|_{1,T}  \lesssim h^{k+1} |u|_{H^{k+2}(\DOM)} \|\uvh\|_{1,h} 
\end{align*}

\textbf{Step 3.} The last term $\term[grad]=\sumT M_T$ reads as a sum of three terms 
\[
  \small 
M_T = (\cG u, \cG v_T)_T - i\sumF(\cG u \cdot \vn, v_T - v_F)_F  - (\GT\IT u, \GT \uvTF)_T.
\]
We first remark that, since $u$ is a regular function, we have $\projF(u)= \projT(u)_{| F}$ for any $F\in \Fh$, hence
\[
  (\GT[\vA] \IT u, \vtau)_T = (\cG u, \vtau)_T - i\sumF (\projF(u) - \projT(u), \vtau\cdot \vn)_F = (\cG u, \vtau)_T,
\]
for any $\vtau \in \Poly{k}(T)^d$, in other terms
\begin{equation}
  \label{eq:GT.VA.projT}
\GT[\vA] \IT u = \projT(\cG u), \quad \forall \vA \in W^{k+1,\infty}(\DOM).
\end{equation}
Replacing $\cG u$ with $\cG u - \projT(\cG u) + \projT (\cG u)$ in the volumetric term \textit{and} the face terms, yields
{\small
\begin{align*}
  M_T &= (\cG u, \cG v_T)_T - i\sumF(\cG u \cdot \vn, v_T - v_F)_F  - (\GT\IT u, \GT \uvTF)_T \\
      &= (\projT(\cG u), \cG v_T)_T + \mhl{(\cG u - \projT(\cG u), \cG v_T)_T} - i\sumF(\projT(\cG u) \cdot \vn, v_T - v_F)_F  \\
      & \qquad - (\GT\IT u, \GT \uvTF)_T \mhl{- i\sumF((\cG u - \projT(\cG u)) \cdot \vn, v_T - v_F)_F} \\
      &= (\projT(\cG u), \cG v_T)_T -i\sumF(\projT(\cG u) \cdot \vn, v_T - v_F)_F + \mhl{\term[2]} \\
      & \qquad - (\GT\IT u, \GT \uvTF)_T
\end{align*}
}
where we have defined $\mhl{\term[2]}$ as the sum of the two terms in grey in the second equality.
We notice that, by definition of $\GT[\vA]$ \eqref{eq:def.GT.adjoint} with $\vtau=\projT(\cG u)$ as a test function, we have 
\[
  (\projT(\cG u), \GT[\vA] \uvTF)_T = (\projT(\cG u), \cG v_T)_T -i\sumF(\projT(\cG u) \cdot \vn, v_T - v_F)_F,
\]
which corresponds exactly to the first two terms, i.e. $M_T$ rewrites as
\[
  M_T = (\projT(\cG u), \GT[\vA] \uvTF)_T - (\GT\IT u, \GT \uvTF)_T + \term[2].
\]
Using \eqref{eq:GT.VA.projT} with $\vA=\vAT$, we have $\GT \IT u = \projT(\cGAT u)$, therefore $M_T$ rewrites as
\begin{align*}
  M_T &= (\projT(\cG u), \GT[\vA] \uvTF)_T- (\projT(\cGAT u), \GT \uvTF)_T + \term[2]\\
      &= (\projT(\cG u), \GT[\vA] \uvTF)_T \pm (\projT(\cG u), \GT[\vAT] \uvTF) - (\projT(\cGAT u), \GT \uvTF)_T  + \term[2]\\
      &= (\projT(\cG u), \GT[\vA] \uvTF -  \GT[\vAT] \uvTF)_T +  (\projT(\cG u) - \projT(\cGAT u), \GT \uvTF)_T + \term[2] \\
      &=: \term[1] + \term[2].
\end{align*}
It remains to bound $\term[1]$ and $\term[2]$. 
For the first term, we have
\begin{align*}
  |\term[1]| &\le ||\projT (\cG u)||_T ||\GT[\vA] \uvTF - \GT \uvTF||_T + ||\projT(\cG u - \cG[\vAT] u)||_T ||\GT \uvTF||_T \\
             &\le ||\cG u||_T Ch_T^{k+1} + ||(\vAT - \vA)u||_T ||\uvTF||_{1,T} \\
             &\le C||\cG u||_T h_T^{k+1} + ||\vA - \vAT||_{L^{\infty}} ||u||_T ||\uvTF||_{1,T} \\
             &\le C||\cG u||_T h_T^{k+1} + \widetilde{C}h^{k+1}|\vA|_{W^{k+1,\infty(T)}} ||u||_T ||\uvTF||_{1,T} \\
             &\lesssim h^{k+1}|\vA|_{W^{k+1,\infty}} |u|_{H^{k+1}(T)} ||\uvTF||_{1,T}
\end{align*}
where we have used Corollary \ref{cor:GT.chi_zero}, the definition of $\cG$ together with the linearity of $\projT$, and the boundedness of $\projT$ and $\GT$.
For the second term $\term[2]$, we remark that
\begin{align*}
  \term[2] &= (\cG u - \projT(\cG u), \cG v_T)_T  - i\sumF((\cG u - \projT(\cG u)) \cdot \vn, v_T - v_F)_F \\
           &= (\cG u - \projT(\cG u), -i\GRAD v_T - \vA v_T)_T - i\sumF((\cG u - \projT(\cG u)) \cdot \vn, v_T - v_F)_F \\
           &= (\cG u - \projT(\cG u), - \vA v_T)_T - i\sumF((\cG u - \projT(\cG u)) \cdot \vn, v_T - v_F)_F
\end{align*}
since $\GRAD v_T \in \Poly{k-1}(T)^d$, and $\cG u -\projT(\cG u)$ cancels against any polynomial in $\Poly{k}(T)^d$. 
We can now estimate $\term[2]$ as follows:
\begin{align*}
  |\term[2]| & \le ||\cG u - \projT(\cG u)||_T ||\vA||_{L^{\infty}(T)}||v_T||_T + \sum_F ||\cG u - \projT(\cG u)||_F ||v_T - v_F||_F \\
             & \le C ||\vA||_{L^{\infty}(T)} h_T^{k+1}|\cG u|_{H^{k+1}(T)}  ||v_T||_T + \sum_F Ch_T^{k+1}|\cG u|_{H^{k+1}(T)} h^{-\frac12}||v_T - v_F||_F \\
             & \le C ||\vA||_{L^{\infty}(T)} h_T^{k+1}|\cG u|_{H^{k+1}(T)}  ||v_T||_T + \Np C h_T^{k+1} |\cG u|_{H^{k+1}(T)} ||\uvTF||_{1,T}.
\end{align*}
where we have used \eqref{eq:proj.approx.cell} for the first term and \eqref{eq:proj.approx.trace} for the trace terms, and the definition of the norms \eqref{eq:discrete.norms}.
Summing $\term[1]$ and $\term[2]$ over all elements $T \in \Th$, using
\[
|\cG u|_{H^{k+1}(T)} \le C \left( |u|_{H^{k+2}(T)} + \|\mathbf{A}\|_{W^{k+1,\infty}(T)} \|u\|_{H^{k+1}(T)} \right),
\]
and applying the discrete Cauchy-Schwarz and Poincaré inequalities (Lemma ~\ref{lem:discrete.Poincare}) yields the global optimal bound for $|\term[grad]|$, which concludes the proof for the energy norm.
The $L^2$-error bound relies on the classic Aubin-Nitsche duality argument.
\end{proof}

\section{Numerical Results}
\label{sec:numerics}

\begin{figure}
  \centering
  \includegraphics[width=0.6\textwidth]{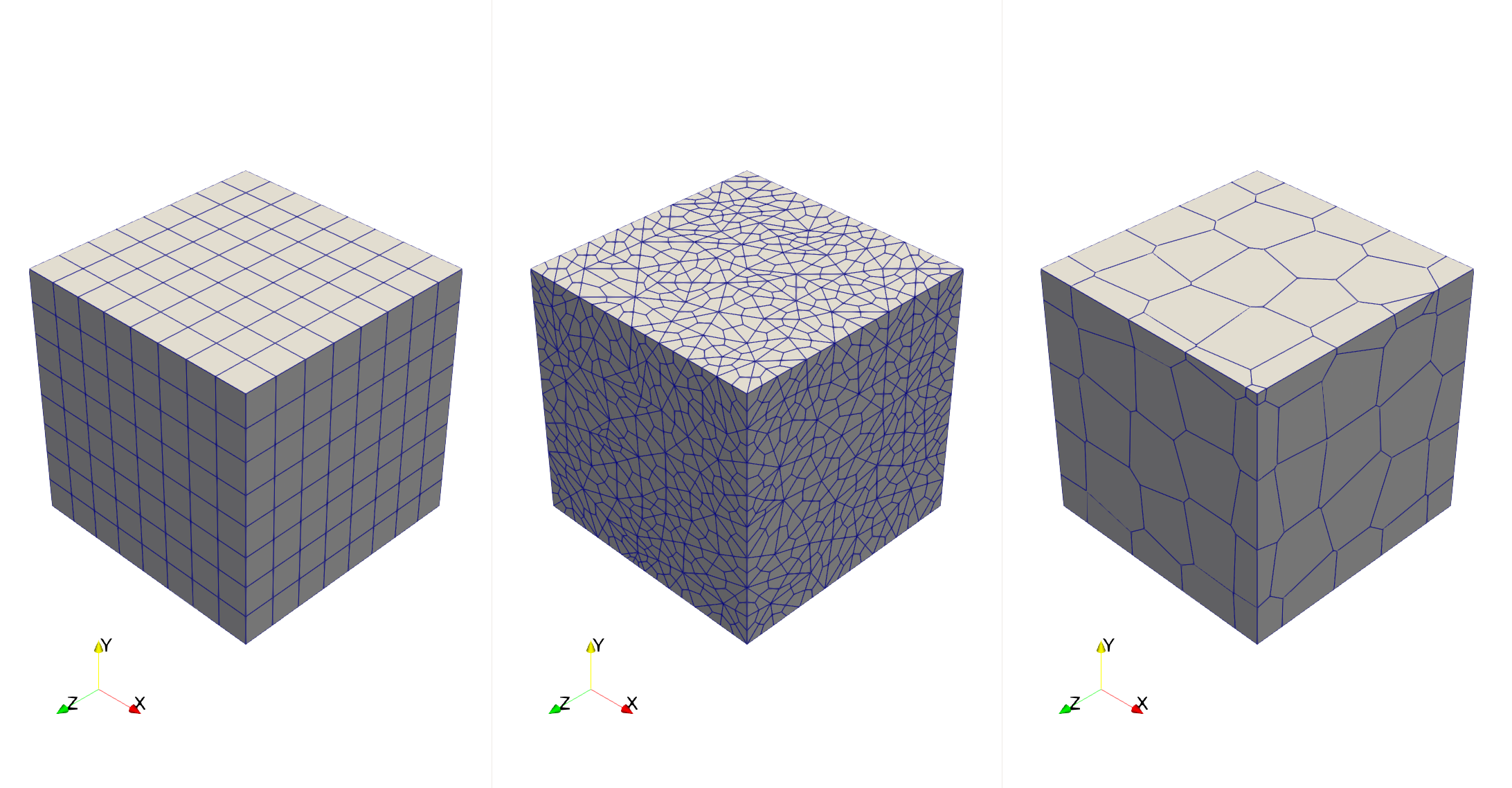}
  \includegraphics[width=0.35\textwidth]{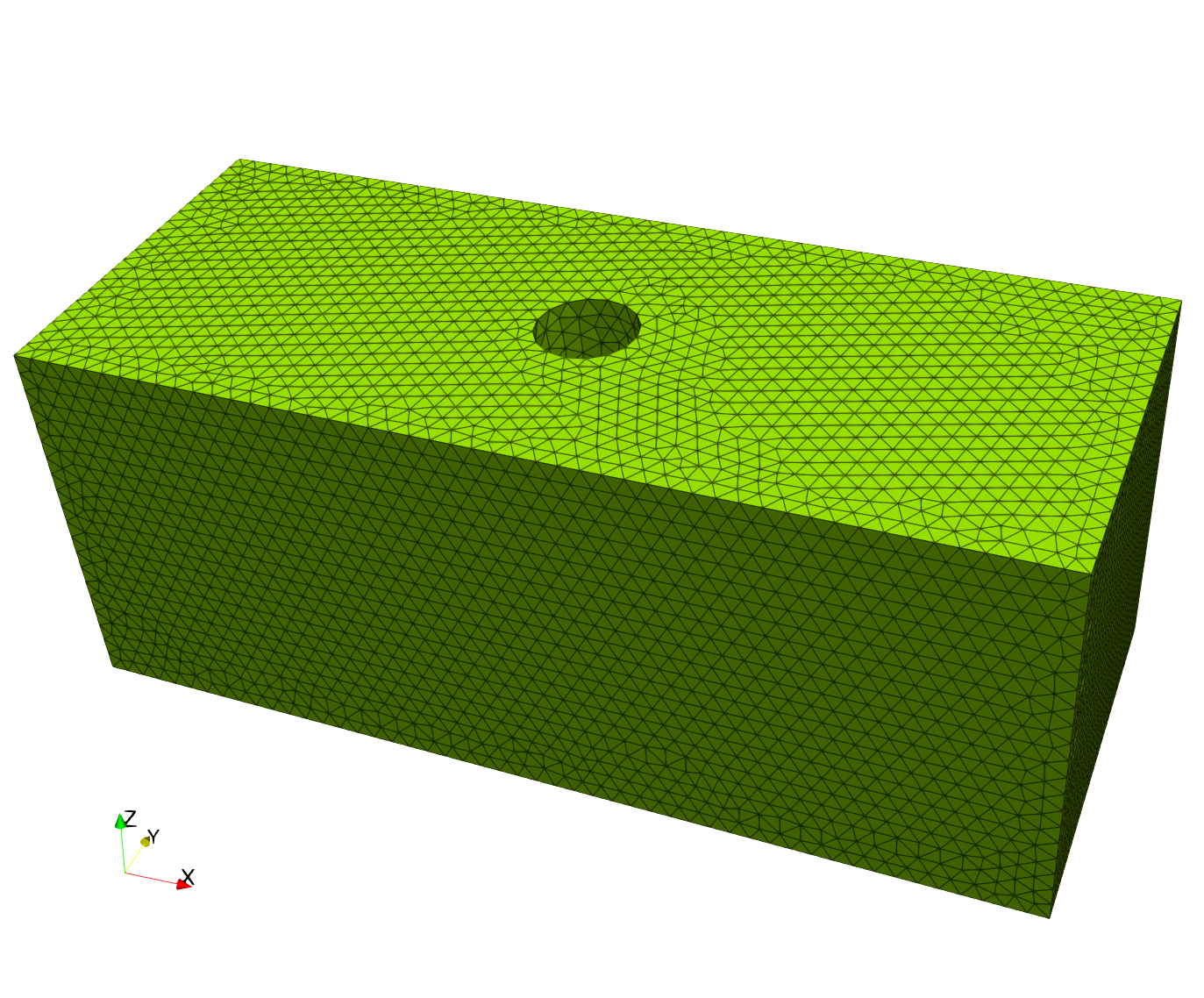}
  \caption{Meshes used in the numerical experiments in \Cref{sec:numerics} -- From first to third: the 8x8x8 cubes, \texttt{hex} and the Voronoi meshes;  the fourth is a 3D simplicial mesh utilized for the Aharonov-Bohm experimental test case ($\approx$ 88k cells).}
  \label{fig:meshes}
\end{figure}
In this section, we validate our results through two distinct test cases.
First, we address the eigenvalue problem in Eq. \cref{eq:Schrodinger.stationnary} and confirm that the discrepancies between the eigenvalues are effectively controlled by the upper bound prediction in Theorem \ref{thm:gauge_covariance}.
Second, we investigate the unsteady Schr\"{o}dinger equation in Eq. \eqref{eq:Schrodinger.time} to replicate the Aharonov-Bohm experiment, wherein the presence of a magnetic field induces a phase shift in the propagating photons.
Both simulations were conducted using the \texttt{HArDCore3D} library\footnote{\texttt{https://github.com/jdroniou/HArDCore3D-release}}, a three-dimensional implementation of the Hybrid High-Order (HHO) schemes \cite{DiPietro2020}, in conjunction with the \texttt{Spectra} library \cite{spectralib} for eigenvalue computations.

\subsection{The Eigenvalue Problem: Fock-Darwin Spectrum and Gauge Invariance}
\label{sec:num.eig.problem}

To assess the accuracy of our HHO scheme for the eigenvalue problem \eqref{eq:Schrodinger.stationnary} and numerically validate the discrete gauge covariance established in Theorem~\ref{thm:gauge_covariance}, we consider a two-dimensional quantum harmonic oscillator subjected to a uniform perpendicular magnetic field $\vec{B} = B \vec{e}_z$. 

We operate on a sufficiently large truncated domain $\Omega = [-L, L]^2$ with homogeneous Dirichlet boundary conditions. Following our initial non-dimensionalization ($\hbar = m = q = 1$), the scalar confinement potential is given by:
\begin{equation}
    V(x,y) = \frac{1}{2} \omega_0^2 (x^2 + y^2),
\end{equation}
where $\omega_0 > 0$ is the oscillator frequency.
A constant magnetic field $\vB$ can be represented by infinitely many vector potentials $\vA$ related by gauge transformations \eqref{def:continuous_gauge_transformation}.
To highlight the exact discrete gauge invariance of our scheme under discrete gauge transformations, we solve the generalized eigenvalue problem using three distinct gauges, the symmetric gauge $\vA_{\text{sym}}$, the Landau $\vA_{\text{Lan}}$ and a manufactured gauge $\vA_{\text{smooth}}$ defined as 

\[
  \vA_{\text{sym}} = \frac{B}{2} \begin{pmatrix} -y \\ x \end{pmatrix},
  \qquad
  \vA_{\text{Lan}} = \begin{pmatrix} -By \\ 0 \end{pmatrix},
  \quad
  \vA_{\text{smooth}} =  \begin{pmatrix} -\tfrac12 B y + 0.1 \\ \tfrac12 B x + 0.1 \end{pmatrix}.
\]
Note that these quantities are related by $\vA_{\text{Lan}} = \vA_{\text{sym}} + \GRAD \left(-\frac{B}{2} xy\right)$ and $\vA_{\text{smooth}} = \vA_{\text{sym}} + \GRAD \left(0.1(x+y)\right)$.
The exact analytical eigenvalues of \eqref{eq:Schrodinger.stationnary} in three dimensions are derived from the 2D Fock-Darwin spectrum \cite[Eq. (23)]{Governale1998}, extended by a 1D particle-in-a-box spectrum along the $z$-axis due to the homogeneous Dirichlet boundary conditions imposed on $z \in [-L,L]$. They are independent of the gauge $\vA$ and are given, for any quantum numbers $n \in \mathbb{N}$, $m \in \mathbb{Z}$, and $n_z \in \mathbb{N}^*$, by:
\begin{equation}
  E_{n,m,n_z} = \sqrt{B^2 + 2\omega_0^2} (2n + |m| + 1) - m B + \frac{n_z^2\pi^2}{(2L)^2}.
\end{equation}
The fundamental energy level, known as the Fock-Darwin ground state, corresponds to $n=m=0$ and $n_z=1$, yielding:
\begin{equation}
  E_{0,0,1} = \sqrt{B^2 + 2\omega_0^2} + \frac{\pi^2}{(2L)^2}.
  \label{eq:fundamental.energy}
\end{equation}
In what follows, we assume that $w_0=1$ and $B=1$.
The discrete eigenvalue problem reads : find $\uh\in\Uh$ and $\lambda_h\in\Real$ such that $a_h(\uuh, \uvh; \vA_T)=\lambda_h (\uuh, \uvh)_{L^2(\DOM)}$. 
In Figure \ref{fig:eig.gauge.invariance}, we report the maximum deviation absolute error
\[
  \mathtt{dev\_error}(\mathrm{g}) := \max_{0\le j \le N-1} |\lambda_{h, j}^{(\text{sym})} - \lambda_{h, j}^{(\text{gauge})}|,
\]
with $\mathrm{g}\in \{ \mathrm{smooth}, \mathrm{Landau}\}$, for the first $N=5$ eigenvalues across polynomial degrees from $k=0$ to $k=3$, on progressively refined meshes.
We observe a superconvergence of one order for $k=0$ (in blue), but an optimal decay rate for $k=2$ (in black), suggesting that the prediction rate $\bigO(h^{k+1})$ is optimal and confirming that the discrete spectrum behaves covariantly consistent with the discretization error.
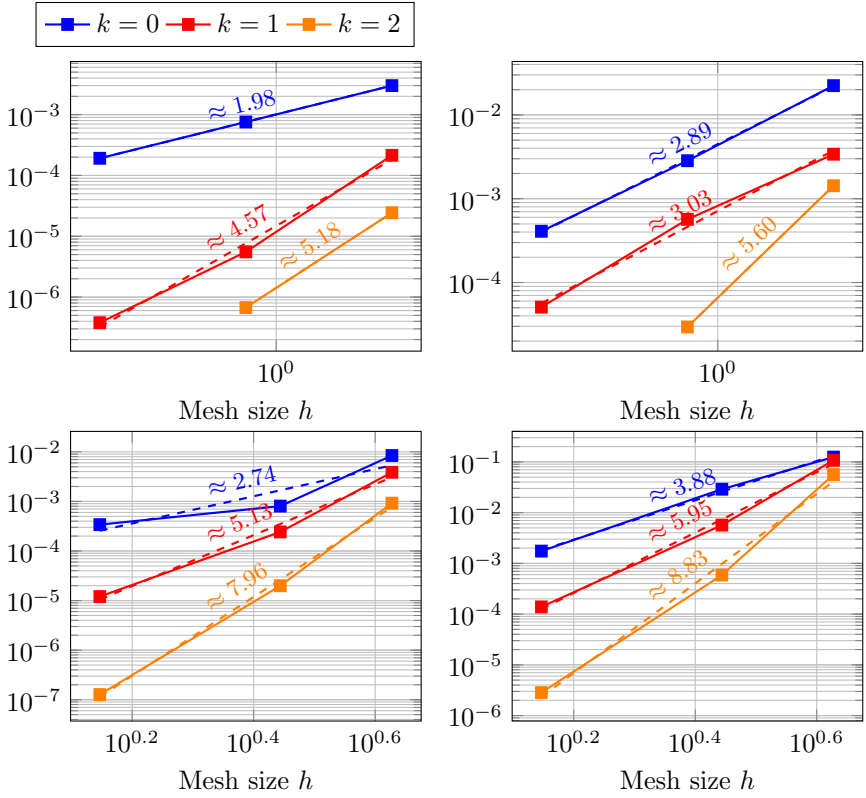
\begin{figure}
  \begin{tikzpicture}
  \begin{loglogaxis}[
      xlabel={Mesh size $h$},
      legend style={at={(-0.1,1.05)},anchor=south west},
      legend columns=3, 
      grid=both,
      width=0.48\textwidth
  ]
  \addplot[blue, thick, mark=square*] coordinates {
      (1.732051e+00, 3.000568e-03)
      (8.660254e-01, 7.570668e-04)
      (4.330127e-01, 1.915630e-04)
  };
  \addlegendentry{$k=0$}
  \addplot[blue, dashed, thick, forget plot] coordinates {
      (4.330127e-01, 1.914714e-04)
      (1.732051e+00, 2.999132e-03)
  } node[pos=0.5, above, sloped, font=\small] {$\approx 1.98$};
  \addplot[red, thick, mark=square*] coordinates {
      (1.732051e+00, 2.142390e-04)
      (8.660254e-01, 5.537964e-06)
      (4.330127e-01, 3.792976e-07)
  };
  \addlegendentry{$k=1$}
  \addplot[red, dashed, thick, forget plot] coordinates {
      (4.330127e-01, 3.224411e-07)
      (1.732051e+00, 1.821247e-04)
  } node[pos=0.5, above, sloped, font=\small] {$\approx 4.57$};
  \addplot[orange, thick, mark=square*] coordinates {
      (1.732051e+00, 2.437371e-05)
      (8.660254e-01, 6.725295e-07)
  };
  \addlegendentry{$k=2$}
  \addplot[orange, dashed, thick, forget plot] coordinates {
      (8.660254e-01, 6.725295e-07)
      (1.732051e+00, 2.437371e-05)
  } node[pos=0.5, above, sloped, font=\small] {$\approx 5.18$};
  \end{loglogaxis}
\end{tikzpicture}
  \begin{tikzpicture}
  \begin{loglogaxis}[
      xlabel={Mesh size $h$},
      legend pos=south east,
      grid=both,
      width=0.48\textwidth
  ]
  \addplot[blue, thick, mark=square*] coordinates {
      (1.732051e+00, 2.235178e-02)
      (8.660254e-01, 2.840135e-03)
      (4.330127e-01, 4.084866e-04)
  };
  \addplot[blue, dashed, thick, forget plot] coordinates {
      (4.330127e-01, 4.001374e-04)
      (1.732051e+00, 2.189492e-02)
  } node[pos=0.5, above, sloped, font=\small] {$\approx 2.89$};
  \addplot[red, thick, mark=square*] coordinates {
      (1.732051e+00, 3.395875e-03)
      (8.660254e-01, 5.669889e-04)
      (4.330127e-01, 5.086607e-05)
  };
  \addplot[red, dashed, thick, forget plot] coordinates {
      (4.330127e-01, 5.641436e-05)
      (1.732051e+00, 3.766285e-03)
  } node[pos=0.5, above, sloped, font=\small] {$\approx 3.03$};
  \addplot[orange, thick, mark=square*] coordinates {
      (1.732051e+00, 1.429109e-03)
      (8.660254e-01, 2.946901e-05)
  };
  \addplot[orange, dashed, thick, forget plot] coordinates {
      (8.660254e-01, 2.946901e-05)
      (1.732051e+00, 1.429109e-03)
  } node[pos=0.5, above, sloped, font=\small] {$\approx 5.60$};
  \end{loglogaxis}
\end{tikzpicture}\\
  \begin{tikzpicture}
  \begin{loglogaxis}[
      xlabel={Mesh size $h$},
      grid=both,
      width=0.48\textwidth
  ]
  \addplot[blue, thick, mark=square*] coordinates {
      (4.242641e+00, 8.423811e-03)
      (2.779004e+00, 8.052621e-04)
      (1.401890e+00, 3.401612e-04)
  };
  \addplot[blue, dashed, thick, forget plot] coordinates {
      (1.401890e+00, 2.569741e-04)
      (4.242641e+00, 5.352151e-03)
  } node[pos=0.5, above, sloped, font=\small] {$\approx 2.74$};
  \addplot[red, thick, mark=square*] coordinates {
      (4.242641e+00, 3.873181e-03)
      (2.779004e+00, 2.424013e-04)
      (1.401890e+00, 1.205513e-05)
  };
  \addplot[red, dashed, thick, forget plot] coordinates {
      (1.401890e+00, 1.046462e-05)
      (4.242641e+00, 3.080966e-03)
  } node[pos=0.5, above, sloped, font=\small] {$\approx 5.13$};
  \addplot[orange, thick, mark=square*] coordinates {
      (4.242641e+00, 9.223936e-04)
      (2.779004e+00, 1.992894e-05)
      (1.401890e+00, 1.273458e-07)
  };
  \addplot[orange, dashed, thick, forget plot] coordinates {
      (1.401890e+00, 1.141051e-07)
      (4.242641e+00, 7.723316e-04)
  } node[pos=0.5, above, sloped, font=\small] {$\approx 7.96$};
  \end{loglogaxis}
\end{tikzpicture}
  \begin{tikzpicture}
  \begin{loglogaxis}[
      xlabel={Mesh size $h$},
      legend pos=south east,
      grid=both,
      width=0.48\textwidth
  ]
  \addplot[blue, thick, mark=square*] coordinates {
      (4.242641e+00, 1.245194e-01)
      (2.779004e+00, 2.894204e-02)
      (1.401890e+00, 1.746522e-03)
  };
  \addplot[blue, dashed, thick, forget plot] coordinates {
      (1.401890e+00, 1.822862e-03)
      (4.242641e+00, 1.334401e-01)
  } node[pos=0.5, above, sloped, font=\small] {$\approx 3.88$};
  \addplot[red, thick, mark=square*] coordinates {
      (4.242641e+00, 1.073880e-01)
      (2.779004e+00, 5.683175e-03)
      (1.401890e+00, 1.389544e-04)
  };
  \addplot[red, dashed, thick, forget plot] coordinates {
      (1.401890e+00, 1.257858e-04)
      (4.242641e+00, 9.141613e-02)
  } node[pos=0.5, above, sloped, font=\small] {$\approx 5.95$};
  \addplot[orange, thick, mark=square*] coordinates {
      (4.242641e+00, 5.632708e-02)
      (2.779004e+00, 5.889607e-04)
      (1.401890e+00, 2.838730e-06)
  };
  \addplot[orange, dashed, thick, forget plot] coordinates {
      (1.401890e+00, 2.335812e-06)
      (4.242641e+00, 4.109166e-02)
  } node[pos=0.5, above, sloped, font=\small] {$\approx 8.83$};
  \end{loglogaxis}
\end{tikzpicture}
  \caption{Gauge transformation errors : (Left) $\mathtt{dev\_error}(\text{smooth})$; (Right) $\mathtt{dev\_error}(\text{Landau})$, on three different meshes : cubes (Top) and  hex (Bottom).}
  \label{fig:eig.gauge.invariance}
\end{figure}
\begin{figure}[htbp]
    \centering
    \begin{tikzpicture}
  \begin{loglogaxis}[
      xlabel={Mesh size $h$},
      ylabel={Relative error $\mathtt{rel\_err}$},
      legend style={at={(-0.1,1.05)},anchor=south west},
      legend columns=3, 
      grid=both,
      width=0.48\textwidth
  ]
  \addplot[blue, thick, mark=*] coordinates {
      (3.464102e+00, 1.844458e-01)
      (1.732051e+00, 1.554502e-02)
      (8.660254e-01, 3.330111e-03)
      (4.330127e-01, 7.927928e-04)
  };
  \addlegendentry{$k=0$}
  \addplot[blue, dashed, thick, forget plot] coordinates {
      (4.330127e-01, 6.373539e-04)
      (3.464102e+00, 1.365081e-01)
  } node[pos=0.5, above, sloped, font=\small] {$\approx 2.58$};
  \addplot[red, thick, mark=*] coordinates {
      (3.464102e+00, 2.384874e-02)
      (1.732051e+00, 3.750477e-03)
      (8.660254e-01, 3.174437e-04)
      (4.330127e-01, 1.563601e-05)
  };
  \addlegendentry{$k=1$}
  \addplot[red, dashed, thick, forget plot] coordinates {
      (4.330127e-01, 2.081951e-05)
      (3.464102e+00, 3.200383e-02)
  } node[pos=0.5, above, sloped, font=\small] {$\approx 3.53$};
  \addplot[orange, thick, mark=*] coordinates {
      (3.464102e+00, 1.797918e-02)
      (1.732051e+00, 2.619107e-04)
      (8.660254e-01, 1.694339e-06)
  };
  \addlegendentry{$k=2$}
  \addplot[orange, dashed, thick, forget plot] coordinates {
      (8.660254e-01, 1.924e-06)
      (3.464102e+00, 2.073e-02)
  } node[pos=0.5, above, sloped, font=\small] {$\approx 6.69$};
  \end{loglogaxis}
\end{tikzpicture}
    \begin{tikzpicture}
  \begin{loglogaxis}[
      xlabel={Mesh size $h$},
      grid=both,
      width=0.48\textwidth
  ]
  \addplot[blue, thick, mark=*] coordinates {
      (6.612884e+00, 1.001138e-01)
      (3.632992e+00, 3.424090e-02)
      (2.442501e+00, 4.631369e-02)
      (1.771054e+00, 1.700776e-02)
      (9.059107e-01, 5.068913e-03)
  };
  \addplot[blue, dashed, thick, forget plot] coordinates {
      (9.059107e-01, 6.297735e-03)
      (6.612884e+00, 1.091573e-01)
  } node[pos=0.5, above, sloped, font=\small] {$\approx 1.44$};
  \addplot[red, thick, mark=*] coordinates {
      (6.612884e+00, 1.236379e-01)
      (3.632992e+00, 2.411750e-02)
      (2.442501e+00, 5.055442e-04)
      (1.771054e+00, 9.092966e-04)
      (9.059107e-01, 8.791179e-05)
  };
  \addplot[red, dashed, thick, forget plot] coordinates {
      (9.059107e-01, 5.836872e-05)
      (6.612884e+00, 1.049066e-01)
  } node[pos=0.5, above, sloped, font=\small] {$\approx 3.77$};
  \addplot[orange, thick, mark=*] coordinates {
      (6.612884e+00, 2.455461e-02)
      (3.632992e+00, 2.956621e-03)
      (2.442501e+00, 2.368012e-04)
      (1.771054e+00, 1.567522e-04)
      (9.059107e-01, 4.223972e-06)
  };
  \addplot[orange, dashed, thick, forget plot] coordinates {
      (9.059107e-01, 5.135075e-06)
      (6.612884e+00, 2.881623e-02)
  } node[pos=0.6, below, sloped, font=\small] {$\approx 4.34$};
  \end{loglogaxis}
\end{tikzpicture}
    \caption{Convergence decay of the relative error \eqref{eq:rel_err} on the cubes (left) and Voronoi (right) meshes.}
    \label{fig:eig.convergence}
\end{figure}
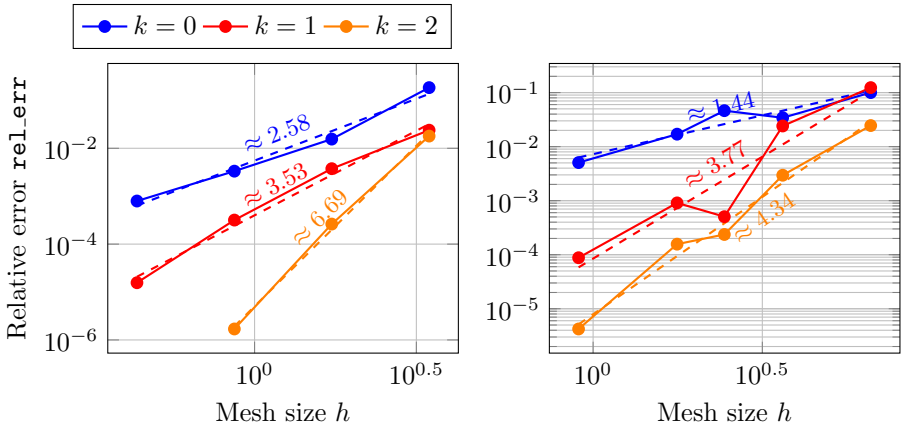
In our context, the fundamental energy \eqref{eq:fundamental.energy} is given as $E_{0,0,1} = \sqrt{3}+\frac{\pi^2}{64} \approx 1.8862633763358985.$
In Figure \ref{fig:eig.convergence}, we plot the relative errors of the fundamental energy level
\begin{equation}
  \mathtt{rel\_err} := |\lambda_{h, 0} - E_{0,0,1}| / E_{0,0,1},
  \label{eq:rel_err}
\end{equation}
against the mesh size $h$ on logarithmic scales, for two families of meshes : cubes and Voronoi meshes, see Fig. \ref{fig:meshes}.
We observe that the discrete eigenvalues converge towards the exact Fock-Darwin levels, the convergence is nearly at the optimal rate of $\bigO(h^{2k+2})$ for the cubes, which confirm the high-order accuracy of the proposed discrete covariant gradient for spectral computations.
On the other hand, suboptimal error rates are observed for the Voronoi mesh family, which might be due to boundary conditions together with higher value of $h$. 

\subsection{The Aharonov-Bohm Effect}
\label{sec:num.AB}

The Aharonov-Bohm (AB) effect is a quantum mechanical phenomenon in which a charged particle
is affected by the electromagnetic vector potential $\vA$ even in regions where the magnetic
field $\vB = \ROT\vA$ vanishes identically \cite{Aharonov1959}.

\begin{figure}
  \centering
  \includegraphics[width=0.8\textwidth]{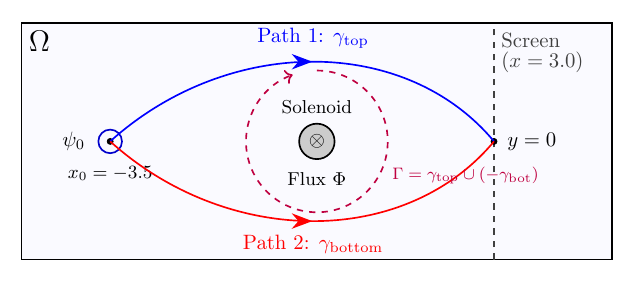}
  \caption{Domain, settings and paths in the the Aharonov-Bohm experiment}
  \label{fig:ab}
\end{figure}

\paragraph{Setup}
We simulate a quantum particle traveling through a 3D rectangular domain $\Omega = [-5, 5] \times
[-2, 2] \times [-2, 2]$ from which a cylindrical 
solenoid of radius $r_s = 0.5$ is excluded, see Figure \ref{fig:ab} for a 2D representation and Figure \ref{fig:meshes} for a 3D representation with a simplicial mesh.
The particle state is encoded in the quantum state $\upsih$, solution of the time-dependant discrete magnetic Schr\"odinger equation expressed with the discrete covariant gradient: find $\upsih\in\Uh$ such that
\begin{align*}
  (\partial_t \psi_h, \phi_h)_{L^2(\DOM)} + a_h(\upsih, \uphih) & =  \ell(\uphih), \qquad \text{ in } \DOM,\\
  \upsih(t=0) &= \Ih \psi_{0},  \qquad \text{ in } \Gamma.
\end{align*}
We use a Crank-Nicholson time scheme with homogeneous Dirichlet boundary conditions on all boundaries (including outer walls and solenoid surface).
The magnetic vector potential outside the solenoid is the classical irrotational field,
\begin{equation}
  \vA(x, y,z) \eqbydef \frac{\Phi}{2\pi} \frac{1}{x^2 + y^2} \begin{pmatrix} -y \\ x \\ 0 \end{pmatrix}, \qquad (x,y,z)\in \DOM,
\end{equation}
where $\Phi$ is the total magnetic flux confined inside the solenoid. Note that $\ROT \vA = \mathbf{0}$ 
everywhere in $\Omega$ by construction, so \emph{the particle experiences no local magnetic force} 
in the domain; the entire effect is encoded in the topology of $\Omega$ and the circulation of 
$\vA$ around the solenoid.

The initial wavepacket is a Gaussian centered at $x_0 = -3.5$ propagating in the $+x$ direction,
\begin{equation}
  \psi_0(x,y,z) = \exp\!\left( -\frac{(x - x_0)^2 + y^2 + z^2}{2\sigma^2} \right) e^{ik_0 x},
  \qquad \sigma = 0.8,\quad k_0 = 3.
\end{equation}
As the wavepacket encounters the solenoid, it naturally splits into two partial waves propagating along both sides (top and bottom, see Fig. \ref{fig:ab}), which then recombine downstream to form an interference pattern. 

\paragraph{Theoretical Interference Pattern.} The presence of the vector potential $\vA\ne \vzero$ induces a relative phase shift $\Delta\phi$ between the top and bottom partial waves.
This shift is given by the line integral of $\vA$ along the closed loop $\Gamma$ formed by the two paths.
By Stokes' theorem, this is exactly the enclosed magnetic flux:
\[
  \Delta\phi = \oint_{\gamma_{\mathrm{top}}} \vA \cdot d\vl - \oint_{\gamma_{\mathrm{bottom}}} \vA \cdot d\vl = \oint_\Gamma \vA \cdot d\vl = \iint_{\DOM_S} \ROT \vA \cdot d\vS = \iint_{\DOM_S} \vB \cdot d\vS = \Phi.
\]

Let us analyze the expected intensity $I(y)$ on a vertical detection screen downstream, at $x=3.0$. At the exact center of the screen ($y=0$), the geometric paths are perfectly symmetric, see Fig. \ref{fig:ab}.
Let $\psi_s$ denote the local amplitude acquired via each path in the absence of a magnetic field.
When turning on the vector potential, the total wavefunction $\psi_{end}$ at the center is the coherent superposition of the two paths endowed with their respective AB phases: $\psi_{end}(0) = \psi_s e^{i \int_{\gamma_{\text{top}}} \vA \cdot d\vl} + \psi_s e^{i \int_{\gamma_{\text{bottom}}} \vA \cdot d\vl}$.
Factoring out the global phase of the bottom path, which vanishes when taking the squared modulus, the resulting local intensity on the screen is given by $I(0) = |\psi_s e^{i\Phi} + \psi_s|^2$.
We compare two cases: When $\Phi = 0$, i.e., no flux. The phase shift is $\Delta\phi = 0$, leading to $I(0) = 4|\psi_s|^2$, a \emph{constructive} interference, generating a global maximum (a central peak) at $y=0$. When $\Phi = \pi$,  the phase shift is $\Delta\phi = \pi$, leading to $I(0) = 0$, a \emph{destructive} interference. The probability of finding the particle at the exact center is strictly zero, and the displaced probability creates two symmetric peaks bounding this central node.

Furthermore, the local amplitude $\psi_s$ arriving at the screen is expected to be drastically smaller than the initial amplitude ($|\psi_0| = 1$). This amplitude drop is physically consistent and stems from three combined effects: the natural quantum dispersion of the Gaussian wavepacket over time, the strong backward scattering (reflection) of the wave upon hitting the impenetrable Dirichlet cylinder, and the 3D geometric spreading of the wave in the domain.

\paragraph{Results.}

\begin{figure}[htbp]
  \centering
    \includegraphics[width=1.0\textwidth]{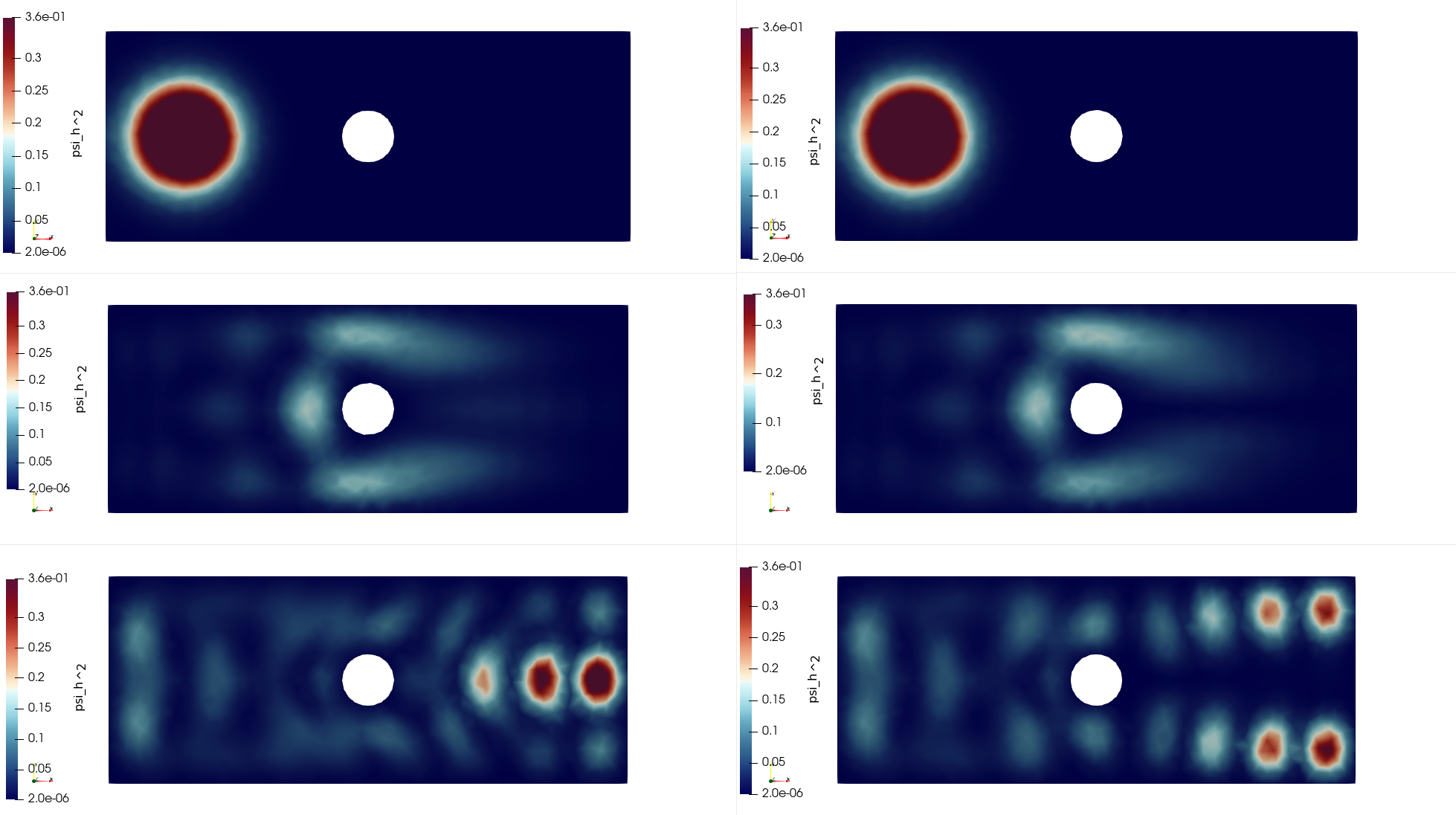} 
  \caption{Aharanov-Bohm test case : (Top) probability density at times $t=0$, (middle) $=0.8$ and $t=1.45$ ; (Left column) No magnetical flux $\Phi=0$, (Right column) with magnetic flux $\Phi=\pi$.}
  \label{fig:ab.chrono}
\end{figure}

\begin{figure}[htbp]
  \centering
    \includegraphics[width=1.0\textwidth]{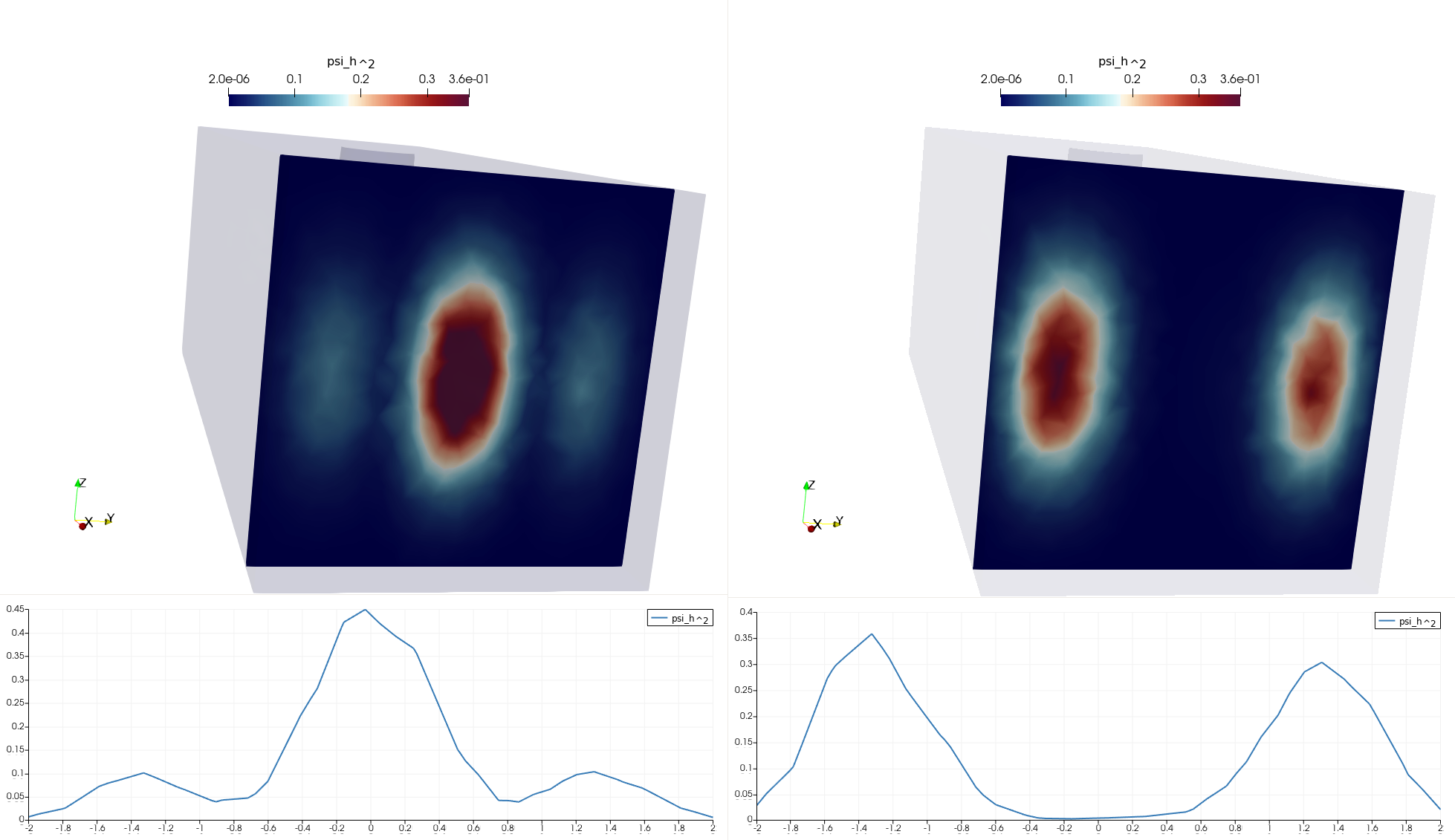} 
  \caption{Aharanov-Bohm test case : Probability density $|\psi_h|^2$ over the screen at time $t=1.49$. (Left) No magnetical flux $\Phi=0$, (Right) with magnetic flux $\Phi=\pi$. The bottom plots shows the value of the density over a line from $(x_{\mathrm{screen}},-2,0)$ to $(x_{\mathrm{screen}},2,0)$.}
  \label{fig:3d.view}
\end{figure}

Figure~\ref{fig:3d.view} displays the computed probability density $|\psi_h|^2(x_{\mathrm{screen}},0,0)$ at time $t = 1.49$.
At the bottom, we display the 1D profiles extracted along the detection line, which match the theoretical predictions.
Indeed, for the zero-flux case ($\Phi = 0$), the profile exhibits an isolated central peak at $y=0$, confirming the constructive interference.
Conversely, for the case $\Phi = \pi$, the central peak completely vanishes and is replaced by a deep central minimum bounded by two symmetric peaks, characterizing the $\pi$ topological phase shift. Additionally, the maximum intensity on the screen correctly reflects the expected amplitude drop, peaking around $\approx 0.45$.

\begin{remark} We also tested the cheaper variant
\[
\GT \uuTF:= -i \GRAD \pT \uuTF - \vAT \, \pT \uuTF,
\]
which reuses the already computed potential reconstruction $\pT$ and avoids solving the local system \eqref{eq:def.GT.adjoint}.
This variant produces similar results for the AB experiment, with similar eigenvalue errors and gauge deviations on all meshes and degrees $k=0,\dots,2$, at the only theoretical cost of a suboptimal $O(h)$ energy-norm bound in \cref{thm:a_priori}.
\end{remark}

\section{Conclusion and Perspectives}

We have proposed a minimal and natural extension of Hybrid High-Order methods to the Schr\"{o}dinger equation with vector potential. 
The scheme relies solely on the existing potential reconstruction operator, allowing for concise derivation of discrete gauge invariance and coercivity.
Future work includes extending to the time-dependent equation, the Pauli model for spin-$\tfrac12$ particles, the Dirac equation, as well as a posteriori error analysis and adaptivity on complex geometries.

\section*{Acknowledgments}
The author acknowledges the use of Gemini 3.1 Pro GenAI to: (i) polish the English translation, (ii) verify proofs of \cref{sec:hho}, and (iii) generate C++ boilerplate  and Python post-processing codes for the numerical experiments.

\appendix

\section{Known HHO results}
\begin{proposition} Under \cite[Assumption 2.4, p. 49]{DiPietro2020}, we have for any $v\in H^{k+2}(T)$,  $s_T(\IT u, \IT u)^{\frac12} \le C h_T^{k+1}|v|_{H^{k+2}(T)}$, where $C$ is independent of $h$, $T$ and $v$.
  \label{prop:sT.bound}
\end{proposition}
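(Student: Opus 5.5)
The plan is the standard HHO argument: first identify the reconstruction $\pT\IT v$ with the elliptic projection of $v$, which turns each stabilization term into a difference of projections of a single approximation error. That error is then bounded face by face using trace inequalities and the approximation properties of the elliptic projector. (The statement writes $\IT u$; I read it as $\IT v$ throughout.)

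\textbf{Step 1: $\pT\IT v$ is the elliptic projection of $v$.} Take $w\in\Poly{k+1}(T;\Complex)$ in \eqref{eq:pT.def}. Since $\Delta w\in\Poly{k-1}(T)$ and $\GRAD w\cdot\vn\in\Poly{k}(F)$, the projectors $\projT$ and $\projF$ can be dropped in $(\projT v,\Delta w)_T$ and $(\projF v,\GRAD w\cdot\vn)_F$. Integrating by parts back then gives
\[
(\GRAD\pT\IT v,\GRAD w)_T=(\GRAD v,\GRAD w)_T \qquad \forall w\in\Poly{k+1}(T;\Complex).
\]
The normalization $(\pT\IT v-\projT v,1)_T=0$ is equivalent to $(\pT\IT v - v,1)_T=0$. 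Hence $\pT\IT v$ is the elliptic projector of $v$ onto $\Poly{k+1}(T;\Complex)$, in the sense of \cite{DiPietro2020}.

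\textbf{Step 2: reduction to one error function.} Set $e:=\pT\IT v - v$. Because $\projT\projT v=\projT v$ and $\projF\projF v=\projF v$, we get
\[
\delT\IT v=\projT e,\qquad \delT[TF]\IT v=\projF e .
\]
Therefore
\[
s_T(\IT v,\IT v)=\sumF h_F^{-1}\norm[F]{\projF e-(\projT e)_{|F}}^2 .
\]
For each face, I would use the triangle inequality together with the $L^2(F)$-contractivity of $\projF$:
\[
h_F^{-1/2}\norm[F]{\projF e-\projT e}\le h_F^{-1/2}\norm[F]{e}+h_F^{-1/2}\norm[F]{\projT e}.
\]
The second term is handled by the discrete trace inequality \eqref{eq:trace.inv} and the $L^2(T)$-boundedness of $\projT$, giving $\lesssim h_F^{-1}\norm[T]{e}$. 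The first term is handled by the continuous trace inequality $\norm[F]{e}^2\lesssim h_T^{-1}\norm[T]{e}^2+h_T\norm[T]{\GRAD e}^2$. Using $h_F\simeq h_T$ from mesh regularity, each face contributes at most $\lesssim h_T^{-1}\norm[T]{e}+\norm[T]{\GRAD e}$. Summing over at most $\Np$ faces keeps the constant independent of $h$.

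\textbf{Step 3: approximation estimates.} It remains to use the optimal approximation properties of the elliptic projector on admissible polytopal elements:
\[
\norm[T]{e}\lesssim h_T^{k+2}|v|_{H^{k+2}(T)},\qquad \norm[T]{\GRAD e}\lesssim h_T^{k+1}|v|_{H^{k+2}(T)} .
\]
Inserted into Step 2, these yield $s_T(\IT v,\IT v)^{1/2}\le C h_T^{k+1}|v|_{H^{k+2}(T)}$, with $C$ depending only on $\varrho$, $k$ and $d$.

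\textbf{Main obstacle.} The delicate points are the commuting identity of Step 1 and the approximation estimates of Step 3 on general polytopes. The estimates rely on the mesh-regularity machinery of \cite[Chapter~1]{DiPietro2020}, specifically trace and Poincaré inequalities on star-shaped unions of simplices. I would invoke these results directly rather than reprove them. If a general stabilization satisfying \cite[Assumption 2.4]{DiPietro2020} is used instead, the same conclusion follows from its polynomial consistency ($s_T$ vanishes on $\IT\Poly{k+1}(T)$) combined with its boundedness by $\norm[1,T]{\cdot}$, applied to $\IT(v-\pT\IT v)$.
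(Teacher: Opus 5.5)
Your argument is correct, but it takes a different route from the paper. The paper gives no proof of its own and only cites \cite[Proposition 2.14]{DiPietro2020}. There the bound is derived abstractly from Assumption 2.4, along the lines of your closing remark:
\begin{itemize}
\item by symmetry and polynomial consistency, $s_T(I_Tv,I_Tv)=s_T(I_T(v-p_TI_Tv),I_T(v-p_TI_Tv))$;
\item the upper bound in the stability--boundedness condition controls this by $\|I_T(v-p_TI_Tv)\|_{1,T}^2$;
\item the $H^1$-boundedness $\|I_Tz\|_{1,T}\lesssim|z|_{H^1(T)}$ of the interpolator reduces it to $|v-p_TI_Tv|_{H^1(T)}^2$;
\item the gradient estimate for the elliptic projector concludes.
\end{itemize}

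Your main route instead works with the explicit classical stabilization. Step 1 gives the identity $(\delta_{TF}-\delta_T)I_Tv=\pi_F^k e-(\pi_T^k e)_{|F}$ with $e=p_TI_Tv-v$, and you then estimate each face with discrete and continuous trace inequalities.

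What each route buys:
\begin{itemize}
\item The abstract route delivers exactly the generality of the statement, namely any stabilization satisfying Assumption 2.4, and it needs only the $H^1$-seminorm error of the elliptic projector.
\item Your direct route is self-contained and makes the structure of the classical $s_T$ transparent. However, it is tied to that particular form of $s_T$.
\item Your route also uses the $L^2$ bound $\|e\|_T\lesssim h_T^{k+2}|v|_{H^{k+2}(T)}$. This follows at once from the gradient bound by Poincar\'e--Wirtinger, since the normalization gives $(e,1)_T=0$.
\end{itemize}

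Your sketch of the general case leaves one step unstated. Going from $\|I_T(v-p_TI_Tv)\|_{1,T}$ to $h_T^{k+1}|v|_{H^{k+2}(T)}$ requires the boundedness of $I_T$ mentioned above. This is easily supplied from what you already have: your Step 2 face estimates bound the jump part of $\|I_Te\|_{1,T}$. The cell part satisfies $\|\nabla\pi_T^k e\|_T\lesssim h_T^{-1}\|e\|_T$ by the inverse inequality \eqref{eq:trace.inv}.
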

\begin{proof} See \cite[Proposition 2.14]{DiPietro2020}. \end{proof}

\begin{theorem} Let a polynomial degree $l \geq 0$, an
  integer $s \in \{0, \ldots, l+1\}$, and a real number $p \in [1, \infty]$ be
  given. Then, for any $X$ element or face of $\Th$, all
  $v \in W^{s,p}(X)$, and all $m \in \{0, \ldots, s\}$,
  \begin{equation}
    |v - \lproj[X]{l} v|_{W^{m,p}(X)}
    \lesssim h_X^{s-m} |v|_{W^{s,p}(X)}.
    \label{eq:proj.approx.cell}
  \end{equation}
  Moreover, if $s \geq 1$, for all $T \in \Th$, all $v \in W^{s,p}(T)$,
  all $F \in \FT$, and all $m \in \{0, \ldots, s-1\}$, it holds that
  \begin{equation}
    h_T^{\frac{1}{p}} |v - \lproj[T]{l} v|_{W^{m,p}(F)}
    \lesssim h_T^{s-m} |v|_{W^{s,p}(T)}.
    \label{eq:proj.approx.trace}
  \end{equation}
  In \eqref{eq:proj.approx.cell} and \eqref{eq:proj.approx.trace}, the hidden constants depend only on
  $d$, $\varrho$, $l$, $s$, and $p$.
\end{theorem}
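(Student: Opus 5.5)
The plan is the classical argument: reduce to a best-approximation estimate via stability of the projector, get the best-approximation bound from an averaged Taylor polynomial, and derive the trace estimate from a continuous trace inequality. Everything is done on the element or face $X$, using only the mesh regularity of \cref{def:adm.meshes}. Scaling arguments through the matching simplicial submesh $\fTh$ make all constants depend only on $d$, $\varrho$, $l$, $s$, $p$.

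\textbf{Step 1: stability of the projector.} I would first show that $\lproj[X]{l}$ is $L^p$-stable for all $p\in[1,\infty]$:
\[
\|\lproj[X]{l} v\|_{L^p(X)}\lesssim \|v\|_{L^p(X)}.
\]
For $p=2$ this is immediate, since $\lproj[X]{l}$ is a contraction. For $p\neq2$, write $q=\lproj[X]{l} v$ and use $\|q\|_{L^2(X)}^2=(v,q)_X\le \|v\|_{L^p(X)}\|q\|_{L^{p'}(X)}$. Then apply the discrete Lebesgue-norm equivalences on $\Poly{l}(X)$: $\|q\|_{L^{r}(X)}\simeq |X|^{1/r-1/2}\|q\|_{L^2(X)}$. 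These follow by mapping each simplex of $\fTh[X]$ to a reference simplex, where all norms on $\Poly{l}$ are equivalent, and $\varrho$ controls the number and shape of the simplices.

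\textbf{Step 2: reduction to best approximation.} For any $q\in\Poly{l}(X)$ we have $\lproj[X]{l} q=q$, hence
\[
|v-\lproj[X]{l}v|_{W^{m,p}(X)}\le |v-q|_{W^{m,p}(X)}+|\lproj[X]{l}(v-q)|_{W^{m,p}(X)}.
\]
For the second term, the inverse inequality \eqref{eq:trace.inv}, in its $L^p$ version and iterated $m$ times, combined with Step 1 gives $|\lproj[X]{l}(v-q)|_{W^{m,p}(X)}\lesssim h_X^{-m}\|v-q\|_{L^p(X)}$. It therefore suffices to exhibit a single $q$ such that
\[
|v-q|_{W^{j,p}(X)}\lesssim h_X^{s-j}|v|_{W^{s,p}(X)}\qquad\text{for all } j\in\{0,\dots,s\}.
\]
I would take $q$ to be the averaged Taylor polynomial of degree $s-1\le l$ (Dupont--Scott, or Bramble--Hilbert).

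\textbf{Step 3: the approximation estimate on polytopes (main obstacle).} The Dupont--Scott estimate needs $X$ to be star-shaped with respect to a ball of radius comparable to $h_X$. A general polytope need not be star-shaped, but it is a connected union of a uniformly bounded number of shape-regular simplices. I would first apply the estimate simplex by simplex. I would then patch the local polynomials across neighbouring simplices: two polynomials that are both good approximants of $v$ on adjacent simplices differ by a polynomial that is small on their shared region, and by norm equivalence it is then small on the union. Chaining along the connected submesh gives the estimate on all of $X$. This is exactly the content of the polynomial approximation results on admissible meshes in \cite[Chapter~1]{DiPietro2020}, which I would invoke if the patching becomes too technical.

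\textbf{Step 4: trace estimate.} For the trace bound \eqref{eq:proj.approx.trace}, I would use the continuous trace inequality on regular meshes:
\[
h_T^{1/p}\|w\|_{L^p(F)}\lesssim \|w\|_{L^p(T)}+h_T|w|_{W^{1,p}(T)}.
\]
Apply it to $w=\partial^\alpha(v-\lproj[T]{l}v)$ with $|\alpha|=m\le s-1$. Bounding both right-hand terms by \eqref{eq:proj.approx.cell}, with orders $m$ and $m+1\le s$, yields $h_T^{s-m}|v|_{W^{s,p}(T)}$. Finally, $h_F\simeq h_T$ by mesh regularity, which completes the proof.
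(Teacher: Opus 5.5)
Your outline is correct and is essentially the argument behind the result that the paper merely cites (\cite[Theorem~1.45]{DiPietro2020}): $L^p$-stability of the $L^2$-orthogonal projector, reduction to approximation by averaged Taylor polynomials, treatment of non-star-shaped polytopes through the simplicial submesh, and the continuous trace inequality for the face estimate. The one step to tighten is the gluing in Step~3: two adjacent simplices of the submesh share only a facet, and smallness of $q_{S_1}-q_{S_2}$ in $L^p$ of that facet does not control it on $S_1\cup S_2$ (a polynomial vanishing on the facet's hyperplane is a counterexample), so you must either glue through auxiliary star-shaped sets whose consecutive overlaps contain balls of radius comparable to $h_X$ (the ``connected by star-shaped sets'' device of the cited reference) or also control the traces of all derivatives of order up to $s-1$ on the shared facet.
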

\begin{proof} See \cite[Theorem 1.45]{DiPietro2020}. \end{proof}

\begin{lemma}[Discrete Poincaré Inequality]
  \label{lem:discrete.Poincare}
  There exists $C_P >0$ depending only on $\DOM, d$ and $\rho$ such that,
  \[
    ||v_h||_{L^2(\DOM)} \le C_P ||\uvh||_{1,h}, \qquad \forall \uvh \in \Uh.
  \]
\end{lemma}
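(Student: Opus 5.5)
The statement to prove is the discrete Poincaré inequality $\|v_h\|_{L^2(\Omega)}\le C_P\|\underline{v}_h\|_{1,h}$ on $\Uh$. The plan is to pass from the hybrid unknowns to a conforming function in $H^1_0(\Omega;\Complex)$ and apply the continuous Poincaré inequality there. Since the mesh is complex-valued only through the coefficients, it suffices to treat real and imaginary parts separately, so we may think of real-valued $v_h$. The standard HHO route uses a cochain/averaging (Oswald-type) operator on the matching simplicial submesh $\fTh$ of Definition~\ref{def:adm.meshes}. It builds $\mathcal{I}_h v_h\in H^1_0(\Omega)$, piecewise polynomial of degree $k+1$ on $\fTh$, with nodal values obtained by averaging the element values $v_T$ (and face values $v_F$ on mesh faces, zero on boundary faces).

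\textbf{Key steps.} First, we establish local stability of the lifting:
\[
\|\GRAD \mathcal{I}_h v_h\|_T^2+h_T^{-2}\|\mathcal{I}_h v_h-v_T\|_T^2\lesssim \sum_{T'\in\mathcal{N}(T)}\|\underline{v}_{T'}\|_{1,T'}^2,
\]
where $\mathcal{N}(T)$ is a patch of neighbouring elements whose cardinality is bounded by $\Np$ and $\varrho$. This is the standard estimate that averaging jumps is controlled by jumps. The jump of $v_T$ across an interface $F$ is split as $(v_T-v_F)+(v_F-v_{T'})$, and each piece is bounded by $h_F^{-1/2}$-weighted face terms in $\|\cdot\|_{1,T}$. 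Zero face values on $\Fhb$ similarly control the boundary traces. Discrete trace and inverse inequalities \eqref{eq:trace.inv} and the equivalence $h_F\simeq h_T$ turn nodal differences into these $L^2$ face quantities. Second, we apply the continuous Poincaré inequality $\|\mathcal{I}_h v_h\|\le C_\Omega\|\GRAD\mathcal{I}_h v_h\|$. Third, we use the triangle inequality:
\[
\|v_h\|\le\|v_h-\mathcal{I}_h v_h\|+\|\mathcal{I}_h v_h\|\lesssim (h+C_\Omega)\|\underline{v}_h\|_{1,h},
\]
after summing over $T$ with the finite-overlap property of the patches. Using $h\le\operatorname{diam}(\Omega)$ then gives $C_P$ depending only on $\Omega,d,\varrho$ (and $k$).

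An alternative that avoids explicit averaging is a duality argument. Take $\vtau\in H^1(\Omega)^d$ with $\DIV\vtau=v_h$ and $\|\vtau\|_{H^1}\lesssim\|v_h\|$. Then write
\[
\|v_h\|^2=\sum_T\Bigl[-(\GRAD v_T,\vtau)_T+\sum_F(v_T-v_F,\vtau\cdot\vn)_F\Bigr],
\]
where the face values $v_F$ can be inserted because $\vtau\cdot\vn$ is single-valued and $v_F=0$ on the boundary. Each term is bounded via Cauchy–Schwarz and the continuous trace inequality $h_F\|\vtau\|_F^2\lesssim\|\vtau\|_T^2+h_T^2\|\GRAD\vtau\|_T^2$. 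I would actually present this version, as it is shorter and only needs $H^1$ regularity of a right inverse of the divergence, available on Lipschitz $\Omega$ (Bogovskii, or extend $v_h$ by zero to a ball and solve a Poisson problem there).

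\textbf{Main obstacle.} The delicate point is the face term. It requires the uniform trace inequality on polytopal elements under mesh regularity, which follows from the submesh assumption, together with $\card{\FT}\le\Np$. The rest is bookkeeping.
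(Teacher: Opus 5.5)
Your duality argument (an $H^1(\Omega)^d$ right inverse of the divergence, elementwise integration by parts with the single-valued face unknowns inserted, then Cauchy--Schwarz and the continuous trace inequality) is correct and is essentially the proof of \cite[Lemma 2.15]{DiPietro2020}, which the paper only cites; it also gives a constant depending only on $\Omega$, $d$, $\varrho$ (no $k$), as the statement claims, whereas your averaging route would typically also depend on $k$. One small precision: Bogovskii's operator only inverts the divergence on zero-mean data, so either use your ball/Poisson construction or add the linear field $\frac{1}{d}\bar v_h\,(x-x_0)$ to handle the mean $\bar v_h$ of $v_h$.
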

\begin{proof} see \cite[Lemma 2.15]{DiPietro2020} \end{proof}

\bibliographystyle{siamplain}
\bibliography{paper}

\end{document}